\newtheorem{teo}{Theorem}[section]
\newtheorem{lemma}[teo]{Lemma}
\newtheorem{prop}{Proposition}
\theoremstyle{definition}
\newtheorem{df}[teo]{Definition}
\theoremstyle{remark}
\newtheorem{obs}[teo]{Remark}
\newtheorem{ejp}[teo]{Example}
\newtheorem{prob}[teo]{Problem}
\DeclareMathOperator{\len}{len}
\DeclareMathOperator{\diam}{diam}
\DeclareMathOperator{\clos}{clos}
\DeclareMathOperator{\dist}{dist}
\DeclareMathOperator{\fix}{Fix}
\newcommand{\expc}{\eta}
\newcommand{\clocks}{\mathcal C}
\newcommand{\R}    {\mathbb R}
\newcommand{\Z}  {\mathbb Z}
\renewcommand{\epsilon}{\varepsilon}
\title[Singular Cw-Expansive Flows]{Singular Cw-Expansive Flows}
\author[A. Artigue]{}
\subjclass{Primary: 37E35; Secondary: 54H20.}
\keywords{singular Axiom A, expansive flows, flows on surfaces, continuum theory, Lorenz attractor.}
\email{artigue@unorte.edu.uy}
\begin{document}

\maketitle

\centerline{\scshape Alfonso Artigue }
\medskip
{\footnotesize
 \centerline{Departamento de Matemática y Estadística del Litoral }
   \centerline{Universidad de la Rep\'ublica}
   \centerline{Gral. Rivera 1350, Salto, Uruguay}
}

\bigskip
% \centerline{(Communicated by )}

\begin{abstract}
We study continuum-wise expansive flows with fixed points on metric spaces and low dimensional manifolds.
We give sufficient conditions for a surface flow to be singular cw-expansive and examples showing that 
cw-expansivity does not imply expansivity. 
We also construct a singular Axiom A vector field on a three-manifold being singular cw-expansive 
and with a Lorenz attractor and a Lorenz repeller in its non-wandering set.
\end{abstract}

\section{Introduction}
In this paper we consider the dynamics of flows on compact metric spaces, in particular, three-manifolds and surfaces.
We consider singular flows with some kind of expansivity. 
If $\phi\colon\R\times M\to M$ is a flow of a metric space $(M,\dist)$ we say that $p\in M$ 
is a \emph{singular} point or a \emph{fixed} point if $\phi_t(p)=p$ for all $t\in \R$.
For the study of expansive flows with fixed points Komuro \cite{K} 
introduced the notion of $k^*$-expansivity.
In that paper he proved that the Lorenz attractor is $k^*$-expansive.
The concept of cw-expansivity for flows (see \S \ref{secCorCwExp} for the definition) was recently introduced by Cordeiro \cite{Willy}. 
In his Thesis, a deep analysis of the fundamental properties of such flows was developed. 
Among other things, he proved that the topological entropy of a cw-expansive flow on a compact metric space with topological 
dimension greater than 1 is positive. 
Also, the definition of $N$-expansive flow was introduced in this work. 
These concepts for flows extends the corresponding concepts of cw-expansivity \cite{Ka93} and $N$-expansivity \cite{Mo12} for homeomorphisms.

The purpose of the present article is the study of cw-expansive flows with fixed points. 
In \S \ref{secCwMet} general properties of cw-expansivity are proved for flows on metric spaces.
In Propositions \ref{propEqDfSingCwNoFix} and \ref{propKomuroCordeiro} we apply results from \cite{Willy} 
to prove equivalent definitions of cw-expansivity. 
From our viewpoint, these equivalences simplifies the understanding of these definitions 
because we reduce the number of parameters involved in the definition.
Following \cite{ArKinExp}, we also introduce a \emph{kinematic} version of cw-expansivity to prove some properties with greater generality.
In \S \ref{secSingSurf} we consider flows of surfaces.
In Proposition \ref{propCondSufCwExp} we give sufficient conditions for a surface flow to be singular cw-expansive. 
In \cite[Quest\~ao 2]{Willy} it is asked if there exists a singular cw-expansive flow with fixed points defined on a 
continuum that is not $k^*$-expansive.
Applying Proposition \ref{propCondSufCwExp} we show that such flows do exist.
The given examples present infinitely many fixed points, wandering points and, moreover, Lyapunov stable fixed points. 
As shown in \cite{Ar} these properties cannot be present on a $k^*$-expansive flow of a compact surface.
In \S \ref{secThreeMan} we construct singular cw-expansive vector fields on three-dimensional manifolds. 
The examples are singular Axiom A vector fields with two basic sets, namely, a Lorenz attractor and a Lorenz repeller.

\section{Cw-expansive flows}
\label{secCwMet}
In this section we recall the definitions related to cw-expansive flows, 
we prove some equivalences and we give some topological and dynamical properties of the set of fixed points of the flow. 
In our presentation we use Massera's terminology of \emph{clocks} \cite{Massera}, which means \emph{reparameterizations} or \emph{time changes}. 

\subsection{Cordeiro's cw-expansivity}
\label{secCorCwExp}
A \emph{clock} is an increasing homeomorphism $c\colon\R\to\R$ such that $c(0)=0$ 
and the \emph{clock-space} is the set of all clocks that will be denoted as $\clocks$. 
We consider the clock-space endowed with the compact-open topology. 
Let $(M,\dist)$ be a compact metric space. 
A \emph{continuum of clocks} is a continuous function $\alpha\colon A\to \clocks$ 
with $A\subset M$ a continuum. 
Recall that a \emph{continuum} is a compact connected subset.
For a continuum of clocks $\alpha$ denote as $\alpha_x$ the clock $\alpha(x)$.
Consider a flow $\phi\colon\R\times M\to M$ and 
define 
\[
 \Phi_t^\alpha(A)=\{\phi_{\alpha_x(t)}(x):x\in A\}.
\]
The continuity of $\alpha$ implies that $\Phi^\alpha_t(A)$ is a continuum for all $t\in\R$.
We say that $\phi$ is a \emph{cw-expansive} flow in the sense of Cordeiro \cite{Willy} if 
for all $\epsilon>0$ there is $\delta>0$ 
such that if $\alpha\colon A\to\clocks$ is a continuum of clocks 
and $\diam(\Phi_t^\alpha(A))<\delta$ for all $t\in\R$ then $A\subset \phi_{(-\epsilon,\epsilon)}(x)$ for some $x\in A$. 
In this case we say that $\delta$ is an \emph{expansivity constant}.
If $I\subset\R$ is an interval 
then $\phi_I(x)=\{\phi_t(x):t\in I\}$ is an \emph{orbit segment}. 
Note that an orbit segment may consist on an arc contained in an orbit, a fixed point or a circle (if the orbit is periodic and $I$ is sufficiently large).

Denote by $\fix(\phi)$ the set of fixed points of $\phi$.
In \cite[Lema 2.1]{Willy} it is shown that if a flow $\phi$ is cw-expansive then $\fix(\phi)$ is totally disconnected and open in $M$.
Note that given a cw-expansive flow $\phi$ on $M$ and a totally disconnected set $X$ disjoint from $M$ 
we obtain a cw-expansive flow on $M\cup X$ extending $\phi$ with fixed points at every $x\in X$.
This means that fixed points of a cw-expansive flow have no dynamical relevance (this also happens for Bowen-Walters expansivity \cite{BW}). 

In the next proposition we give an equivalent definition of cw-expansivity of flows that is a motivation for \S \ref{secSingCwExp} 
where we introduce the definition of singular cw-expansivity. 
For this purpose we cite the following result.

\begin{teo}[\cite{Willy}]
\label{teoCwExpEquiv}
A flow $\phi$ of a compact metric space is cw-expansive if and only if 
for all $\epsilon>0$ there is $\delta>0$ such that if $\alpha\colon A\to\clocks$ is a continuum of clocks
and $\diam(\Phi_t^\alpha(A))<\delta$ for all $t\in\R$ then $A$ is an orbit segment with $\diam(A)<\epsilon$. 
\end{teo}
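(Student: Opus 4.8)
The plan is to exploit that the two statements differ only in their conclusions: Cordeiro's definition asks that $A$ be contained in a short-time orbit segment $\phi_{(-\epsilon,\epsilon)}(x)$, while the stated variant asks that $A$ itself be an orbit segment of small diameter. Passing between them amounts to converting a bound on the time parameter into a bound on the metric diameter and back. The bridge in one direction is uniform continuity: since $M$ is compact, $\phi$ is continuous and $\phi_0=\mathrm{id}$, for every $\epsilon>0$ there is $\tau\in(0,\epsilon)$ with $\diam(\phi_{[-\tau,\tau]}(x))<\epsilon$ for every $x\in M$ (apply compactness to the continuous function $(t,x)\mapsto\dist(\phi_t(x),x)$, which vanishes on $\{0\}\times M$).

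For the implication from Cordeiro's definition to the stated one, I would fix $\epsilon>0$, choose $\tau$ as above, and apply cw-expansivity with the constant $\tau$ in place of $\epsilon$ to obtain $\delta>0$. If $\diam(\Phi^\alpha_t(A))<\delta$ for all $t\in\R$, then $A\subset\phi_{(-\tau,\tau)}(x)$ for some $x\in A$. Since a compact connected subset of an orbit segment is again an orbit segment, $A$ is an orbit segment, and $\diam(A)\le\diam(\phi_{[-\tau,\tau]}(x))<\epsilon$ by the choice of $\tau$. This direction is routine.

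For the converse I would assume the stated variant and, given $\epsilon>0$, first apply it with a small auxiliary constant $\epsilon_1>0$ to learn that any $(\alpha,A)$ with small clocked diameter forces $A$ to be an orbit segment with $\diam(A)<\epsilon_1$. It then remains to produce a center $x\in A$ with $A\subset\phi_{(-\epsilon,\epsilon)}(x)$; writing $A=\phi_{[a,b]}(y)$ and taking $x$ at the midpoint, this is exactly the requirement that the time-length $b-a$ be less than $2\epsilon$. This is where I expect the main obstacle to lie, since a bound on $\diam(A)$ does not by itself bound the time-length: near a fixed point the flow is arbitrarily slow, so an orbit segment of tiny diameter may span a long time interval. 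The converse therefore genuinely needs the clock hypothesis, which is what rules this out.

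I would overcome this as follows. If $A$ contains a fixed point then, being an orbit segment, it reduces to that single point and the containment is immediate. Otherwise $A$ is a genuine arc disjoint from $\fix(\phi)$, which is open and totally disconnected by \cite[Lema 2.1]{Willy}. To bound the time-length of such arcs I would argue by contradiction and compactness: if no uniform bound held, one would obtain arcs $A_n$ of fixed small diameter but time-lengths tending to infinity, each carrying a continuum of clocks keeping every time-slice below $\delta_n\to 0$. A Hausdorff limit $A_\infty$ would be a continuum of diameter at most $\epsilon_1$; since $\fix(\phi)$ is open it cannot meet $A_\infty$ (else it would meet $A_n$ for large $n$), so the limiting orbit lingers forever inside a small fixed-point-free set, producing a nondegenerate invariant continuum of small diameter that cannot be collapsed by any continuum of clocks. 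This contradicts the already-established conclusion of the stated variant, and hence forces the time-length below $2\epsilon$ once $\delta$ is small enough, yielding the center $x$ and Cordeiro's conclusion. Assembling the two implications gives the equivalence.
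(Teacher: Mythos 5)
The paper does not actually prove Theorem \ref{teoCwExpEquiv}: it is quoted from Cordeiro's thesis \cite{Willy} without proof, so there is no internal argument to compare yours against and I can only judge the proposal on its own terms. Your forward implication is correct and is the natural argument: pick $\tau$ by compactness so that $\diam(\phi_{[-\tau,\tau]}(x))<\epsilon$ for every $x$, apply Cordeiro's definition with $\tau$ in place of $\epsilon$, and note that a subcontinuum of $\phi_{(-\tau,\tau)}(x)$ is again an orbit segment.

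The converse, which you correctly identify as the delicate direction, has genuine gaps. First, you invoke \cite[Lema 2.1]{Willy} to get that $\fix(\phi)$ is open, but the hypothesis of that lemma is cw-expansivity --- precisely the conclusion you are proving --- and openness of $\fix(\phi)$ does not follow from the orbit-segment condition alone: in the space of Example \ref{ejpNoFixIso} every continuum is contained in a single orbit, so the condition of Theorem \ref{teoCwExpEquiv} holds with $\delta=\epsilon$, yet $\fix(\phi)=\{(0,0)\}$ is not open. Second, your compactness argument does not produce the object you need. Since every clock fixes $0$ we have $\Phi^\alpha_0(A)=A$, so your hypothetical sequence satisfies $\diam(A_n)<\delta_n\to 0$ (not ``fixed small diameter''), and the correct negation only gives time-lengths at least $2\epsilon$, not tending to infinity; the Hausdorff limit $A_\infty$ is then a single point, which is necessarily a fixed point, rather than the nondegenerate invariant continuum you hope to contradict the hypothesis with. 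The obstruction is real: taking the circles of Example \ref{ejpNoFixIso} to be periodic orbits all of period $1$, the full circles are orbit segments of arbitrarily small diameter that are never contained in $\phi_{(-\epsilon,\epsilon)}(x)$ for $\epsilon<1/2$, so the orbit-segment condition holds while Cordeiro's conclusion fails, and no argument along your lines can close the gap without an extra hypothesis bounding the time-length of small-diameter orbit segments (equivalently, controlling non-isolated fixed points). This is consistent with how the paper actually uses the converse: only in the fixed-point-free setting of Proposition \ref{propEqDfSingCwNoFix}, where the compactness argument appearing in the proof of Proposition \ref{propKomuroCordeiro} bounds time-length by diameter and the difficulty disappears. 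If you want a correct proof, either restrict to $\fix(\phi)=\emptyset$ (or $\fix(\phi)$ open) and run that compactness argument, or locate and reproduce Cordeiro's precise formulation, which presumably differs from the literal statement here.
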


In the definition of cw-expansive flow or its equivalent of Theorem \ref{teoCwExpEquiv} there are 
two parameters $\epsilon,\delta$.
The next result shows that cw-expansivity can be equivalently defined using only one of these parameters, obviously, the expansivity constant.

\begin{prop}
\label{propEqDfSingCwNoFix}
 A flow $\phi$ of a compact metric space without fixed points is cw-expansive if and only if 
 there is $\expc>0$ such that if $\alpha\colon A\to\clocks$ is a continuum of clocks
and $\diam(\Phi_t^\alpha(A))<\expc$ for all $t\in\R$ then $A$ is an orbit segment.
\end{prop}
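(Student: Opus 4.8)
The plan is to derive both implications directly from Theorem \ref{teoCwExpEquiv}, which already recasts cw-expansivity as the statement that $\diam(\Phi_t^\alpha(A))<\delta$ for all $t$ forces $A$ to be an orbit segment with $\diam(A)<\epsilon$. The whole content of the proposition is that the single constant $\expc$ encodes the same information, and the reason is that the $\epsilon$-control on $\diam(A)$ comes for free: evaluating the diameter hypothesis at the instant $t=0$ already bounds $\diam(A)$, because every clock fixes the origin.

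For the forward implication I would assume $\phi$ is cw-expansive and apply Theorem \ref{teoCwExpEquiv} with any fixed value of $\epsilon$, say $\epsilon=1$, to obtain a constant $\delta>0$; then I set $\expc=\delta$. Any continuum of clocks $\alpha\colon A\to\clocks$ with $\diam(\Phi_t^\alpha(A))<\expc$ for all $t\in\R$ satisfies the hypothesis of that theorem, so $A$ is an orbit segment, the incidental bound $\diam(A)<1$ being irrelevant here. This is exactly the one-parameter condition.

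For the converse I would assume the constant $\expc$ exists and, to invoke Theorem \ref{teoCwExpEquiv}, produce for each $\epsilon>0$ a constant $\delta>0$ forcing $A$ to be an orbit segment with $\diam(A)<\epsilon$. The choice is $\delta=\min\{\expc,\epsilon\}$. The key observation is that $\alpha_x(0)=0$ for every $x$, whence $\Phi_0^\alpha(A)=\{\phi_0(x):x\in A\}=A$. Thus, if $\diam(\Phi_t^\alpha(A))<\delta$ for all $t\in\R$, then on one hand $\delta\le\expc$ and the hypothesis give that $A$ is an orbit segment, while on the other hand the instance $t=0$ yields $\diam(A)=\diam(\Phi_0^\alpha(A))<\delta\le\epsilon$. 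These are precisely the two conclusions required by Theorem \ref{teoCwExpEquiv}, so $\phi$ is cw-expansive.

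The argument is short, and I expect the only real subtlety to be the recognition---rather than any computation---of the identity $\Phi_0^\alpha(A)=A$, which decouples the two parameters: $\expc$ performs the qualitative task of detecting orbit segments, and the evaluation at $t=0$ supplies the quantitative diameter bound at no cost. The absence of fixed points is the standing assumption of this part of the paper and keeps the notion of orbit segment in its nondegenerate form; the estimates above rely solely on Theorem \ref{teoCwExpEquiv} and on this identity.
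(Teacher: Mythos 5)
Your argument is correct and coincides with the paper's own proof: the forward direction is an immediate application of Theorem \ref{teoCwExpEquiv}, and the converse uses exactly the same choice $\delta=\min\{\epsilon,\expc\}$ together with the evaluation at $t=0$ (i.e., $\Phi_0^\alpha(A)=A$) to recover the diameter bound. Nothing to add.
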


\begin{proof}
The direct part follows by Theorem \ref{teoCwExpEquiv}. 
To prove the converse, for $\epsilon>0$ given define $\delta=\min\{\epsilon,\expc\}$, where $\expc$ is given in the hypothesis. 
Suppose that $\alpha\colon A\to\clocks$ is a continuum of clocks and $\diam(\Phi_t^\alpha(A))<\delta$ for all $t\in\R$. 
As $\delta\leq\expc$, we conclude that $A$ is an orbit segment. Taking $t=0$ we obtain $\diam(A)<\delta$. 
Given that $\delta\leq\epsilon$ we conclude that $A$ is an orbit segment of diameter smaller than $\epsilon$.
\end{proof}

\begin{obs}
 In spite of the simplicity of the proof of Proposition \ref{propEqDfSingCwNoFix}, 
 we think that the result is important because it gives a simpler definition. 
 It is natural to ask if this can be also done for expansive flows, in the sense of Bowen and Walters \cite{BW}. 
 An example showing that this is not the case was given in \cite[Example 2.24]{ArKinExp}.
\end{obs}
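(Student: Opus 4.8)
I read the Remark as the existence claim that the one-parameter reduction carried out in Proposition \ref{propEqDfSingCwNoFix} admits no analogue for Bowen--Walters expansivity, and I would support it by producing a separating flow; the plan is as follows. First I would make the proposed analogue explicit in the clock language of this section. Bowen--Walters expansivity asserts that for every $\epsilon>0$ there is $\delta>0$ such that whenever $x,y\in M$ and $c\in\clocks$ satisfy $\dist(\phi_t(x),\phi_{c(t)}(y))<\delta$ for all $t\in\R$, then $y=\phi_\tau(x)$ for some $\tau$ with $|\tau|<\epsilon$. The naive one-parameter version would ask for a single $\expc>0$ such that the same tracking hypothesis, now with constant $\expc$, forces $y$ to lie on the orbit of $x$, that is $y=\phi_\tau(x)$ for some $\tau\in\R$, with no bound on $|\tau|$.

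Next I would isolate why the proof of Proposition \ref{propEqDfSingCwNoFix} does not transfer. There the structural conclusion ``$A$ is an orbit segment'' is upgraded to the quantitative conclusion $\diam(A)<\epsilon$ at no cost, simply by reading the tracking hypothesis at $t=0$, since $\diam(A)=\diam(\Phi_0^\alpha(A))<\delta\le\epsilon$. The decisive feature is that the cw-conclusion constrains the \emph{diameter} of $A$, and the diameter is literally the $t=0$ instance of the hypothesis. For Bowen--Walters expansivity the target conclusion is instead a bound on the reparametrization \emph{time} $\tau$, while the $t=0$ instance only yields $\dist(x,y)<\delta$. Because $\dist(x,\phi_\tau(x))$ can be arbitrarily small with $|\tau|$ bounded away from $0$, the substitution $\delta=\min\{\epsilon,\expc\}$ produces no control on $\tau$, and the one-parameter statement is a priori strictly weaker.

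This pinpoints which implication must break: the forward direction still holds, since taking $\expc$ to be the constant $\delta$ associated with $\epsilon=1$ already forces $y$ onto the orbit of $x$, so the separating example must satisfy the one-parameter condition while failing to be Bowen--Walters expansive. The mechanism suggested by the obstruction above is to build a fixed-point-free flow along whose orbits the speed becomes arbitrarily small, so that $\inf\|\dot\phi\|=0$ without any genuine singularity; then points $x$ and $y=\phi_\tau(x)$ with $|\tau|$ bounded below can be made to track each other, under a suitable clock, within an arbitrarily small constant. Such pairs keep $y$ on the orbit of $x$, in agreement with the one-parameter condition, while defeating every candidate $\delta(\epsilon)$ in the definition of Bowen--Walters expansivity at a fixed $\epsilon$. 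This is exactly the behaviour realized by \cite[Example 2.24]{ArKinExp}.

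The main obstacle is the simultaneous verification of the two competing requirements on one flow. Showing that Bowen--Walters expansivity fails is the easier half, as it only requires one uniformly slow self-tracking family of orbit pairs. The delicate half is verifying the one-parameter condition, which is a global uniqueness statement about \emph{every} pair of $\expc$-tracking orbits and must exclude tracking by genuinely distinct orbits throughout the space. I expect this control of the global tracking behaviour, rather than the time-offset estimate, to be where the real difficulty lies, and it is this verification that makes a carefully designed example such as \cite[Example 2.24]{ArKinExp} necessary.
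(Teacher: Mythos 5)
Your proposal ultimately rests on the same ground as the paper: the Remark carries no internal proof, its entire justification being the citation of \cite[Example 2.24]{ArKinExp}, and you too defer to that example in the end. Your logical scaffolding is correct and is a fair reconstruction of what the Remark leaves implicit: the one-parameter analogue is the ``separating with reparametrizations'' property (a single $\eta>0$ such that $\eta$-tracking under some clock forces $y$ onto the orbit of $x$, with no bound on the time shift); the forward implication from Bowen--Walters expansivity is immediate, exactly as you say; and any counterexample must therefore defeat expansivity through pairs \emph{on a common orbit} with time offset bounded away from zero, since the one-parameter hypothesis excludes tracking by genuinely distinct orbits. You also correctly isolate why the $t=0$ trick of Proposition \ref{propEqDfSingCwNoFix} does not transfer: the cw-conclusion bounds a diameter, which is literally the $t=0$ instance of the hypothesis, while the Bowen--Walters conclusion bounds a time, which is not readable from any single instant.

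One detail of your proposed mechanism is, however, wrong as stated, even though it is not load-bearing since you delegate the construction to the citation: a fixed-point-free flow generated by a continuous vector field on a \emph{compact} manifold cannot satisfy $\inf\|X\|=0$, because by compactness the infimum is attained and would produce a singularity. So ``speed arbitrarily small without any genuine singularity'' is not realizable in the compact smooth setting in which this section works. The self-tracking pairs $x$, $y=\phi_\tau(x)$ with $|\tau|$ bounded below and arbitrarily small tracking constant must instead be produced by the recurrence structure of the flow (almost-periods along which a clock can resynchronize the two copies of an orbit) or by long passages of regular orbits near an honest fixed point --- the latter being compatible with the one-parameter conclusion, while it is incompatible with Bowen--Walters expansivity, for which fixed points are isolated points of the space \cite{BW}. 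Your description of the separating example should be corrected in this direction; the rest of the analysis, including your observation that verifying the global one-parameter condition is the delicate half, is sound.
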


\subsection{Singular cw-expansive flows}
\label{secSingCwExp}
Motivated by the equivalence of Proposition \ref{propEqDfSingCwNoFix} we introduce the next definition.

\begin{df}
\label{dfSingCwExp}
We say that $\phi$ is a \emph{singular cw-expansive} flow if there is $\expc>0$ such that if $\alpha\colon A\to\clocks$ is a continuum of clocks
and $\diam(\Phi_t^\alpha(A))<\expc$ for all $t\in\R$ then $A$ is an orbit segment. 
In this case we say that $\expc$ is an \emph{expansivity constant}.
\end{df}

By Proposition \ref{propEqDfSingCwNoFix} the definition of singular cw-expansivity is equivalent with cw-expansivity if there are no fixed points of the flow.
This equivalence does not hold for flows with fixed points. 
This can be seen in the circle. It is easy to prove that a flow on the circle is singular cw-expansive if and only if 
it has finitely many fixed points (possibly, no fixed points).
In the next result we prove the equivalence between Cordeiro's version of $k^*$-expansivity and 
Definition \ref{dfSingCwExp}.

For $x\in M$ and $r>0$ denote by $B_r(x)$ the open ball of radius $r$ centered at $x$ and 
if $A\subset M$ define $B_r(A)=\cup_{x\in A} B_r(x)$.

\begin{prop}
\label{propKomuroCordeiro}
 The following statements are equivalent: 
 \begin{enumerate}
  \item  $\phi$ is singular cw-expansive,
  \item (Cordeiro-Komuro cw-expansivity) for all $\epsilon>0$ there is $\delta>0$ 
  such that if $\alpha\colon A\to \clocks$ is a continuum of clocks with 
  $\diam(\Phi_t^\alpha(A))<\delta$ for all $t\in\R$ then there are $t_0\in\R$ and $x\in A$ such that $\Phi_{t_0}^\alpha(A)\subset \phi_{(t_0-\epsilon,t_0+\epsilon)}(x)$.
 \end{enumerate}
\end{prop}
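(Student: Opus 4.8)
My plan is to prove the two implications separately; the direction $(2)\Rightarrow(1)$ is short, while $(1)\Rightarrow(2)$ carries the real content.

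For $(2)\Rightarrow(1)$ I would specialize the parameter. Apply $(2)$ with $\epsilon=1$ to obtain $\delta_1>0$ and set $\expc=\delta_1$. Suppose $\alpha\colon A\to\clocks$ satisfies $\diam(\Phi_t^\alpha(A))<\expc$ for all $t$. By $(2)$ there are $t_0$ and $x\in A$ with $\Phi_{t_0}^\alpha(A)\subset\phi_{(t_0-1,t_0+1)}(x)$. Since each point $\phi_{\alpha_y(t_0)}(y)$ lies on $\orb(x)$, applying $\phi_{-\alpha_y(t_0)}$ gives $y\in\orb(x)$, hence $A\subset\orb(x)$. It then remains to see that a continuum inside a single orbit is an orbit segment. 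If $x$ is fixed then $A=\{x\}$; if $\orb(x)$ is periodic it is a circle and $A$ is a point, an arc, or the whole circle. If $\orb(x)$ is injective, write $\sigma(s)=\phi_s(x)$ and $S=\sigma^{-1}(A)$; the inclusion $\Phi_{t_0}^\alpha(A)\subset\phi_{(t_0-1,t_0+1)}(x)$ forces $s+\alpha_y(t_0)\in(t_0-1,t_0+1)$ for $y=\sigma(s)$, and since $\{\alpha_y:y\in A\}$ is compact in $\clocks$ the numbers $\alpha_y(t_0)$ are bounded, so $S$ is bounded, hence compact. As $A=\sigma(S)$ is connected and $\sigma$ is injective and continuous, $S$ cannot split into two disjoint nonempty compacta, so $S$ is an interval and $A$ is an orbit segment.

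For $(1)\Rightarrow(2)$ I would argue by contradiction with compactness. Assume $(1)$ with constant $\expc$ but that $(2)$ fails for some $\epsilon_0>0$: for each $n$ there is $\alpha^n\colon A^n\to\clocks$ with $\diam(\Phi_t^{\alpha^n}(A^n))<1/n$ for all $t$ while no slice $\Phi_{t_0}^{\alpha^n}(A^n)$ is contained in a $2\epsilon_0$-arc $\phi_{(t_0-\epsilon_0,t_0+\epsilon_0)}(x)$. Since $1/n<\expc$ eventually, $(1)$ gives $A^n=\phi_{J_n}(z_n)$, and the argument above shows every slice is an orbit segment of $\orb(z_n)$. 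Failure of the short-slice conclusion forces each slice to have parameter length at least $2\epsilon_0$; as $t$ ranges over $\R$ the corresponding parameter intervals sweep out all admissible parameters, so every window $\phi_{[T-\epsilon_0,T+\epsilon_0]}(z_n)$ has diameter $<1/n$. Taking a Hausdorff limit, $A^n\to\{p\}$ because $\diam(A^n)<1/n$, and this slow-window estimate forces $\phi_{[-\epsilon_0,\epsilon_0]}(p)=\{p\}$, so $p\in\fix(\phi)$.

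The main obstacle is exactly this limiting fixed-point regime. Since the $A^n$ collapse spatially onto $p$, the contradiction with $(1)$ cannot be read off from a Hausdorff limit of the $A^n$ themselves; instead one must produce a nondegenerate continuum $B$ near $p$, transverse to the arbitrarily slow orbits detected by the window estimate, which stays of small diameter under an appropriate continuum of clocks yet is not an orbit segment, thereby violating $(1)$. This is precisely the mechanism that excludes a nonlinear center, where the transversal is radial. I expect to carry out this step by combining the slow-orbit estimate with the total disconnectedness of $\fix(\phi)$—itself forced by $(1)$, since a nondegenerate subcontinuum of fixed points is a non-orbit-segment left invariant by the identity clocks—and, where the flow structure near $p$ is needed, the results of Cordeiro underlying Theorem~\ref{teoCwExpEquiv}.
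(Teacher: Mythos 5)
Your direction $(2)\Rightarrow(1)$ is correct (and more detailed than the paper's one-line observation that a slice contained in a short orbit arc forces $A$ to be an orbit segment). The direction $(1)\Rightarrow(2)$, however, is not a proof: you have correctly located the hard step and then left it as a plan. After your compactness setup you arrive at orbit segments $A^n=\phi_{J_n}(z_n)$ collapsing to a point $p\in\fix(\phi)$, with every time-window of length $2\epsilon_0$ along $\orb(z_n)$ of diameter $<1/n$, and at that stage you propose to ``produce a nondegenerate continuum $B$ near $p$, transverse to the arbitrarily slow orbits,'' violating $(1)$. This mechanism is not available in general: nothing in the hypotheses produces nondegenerate continua transverse to the flow near a fixed point (in a space such as that of Example \ref{ejpNoFixIso} the only continua near the fixed point are arcs of orbits), so the contradiction cannot be manufactured this way, and the argument stops exactly where the content of the proposition begins.

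The paper closes this gap differently, and the difference is worth internalizing. It first establishes that for a suitable $\gamma>0$ (using that $\fix(\phi)$ is totally disconnected, so that a continuum contained in $B_\gamma(\fix(\phi))$ has diameter smaller than the expansivity constant) every regular orbit must visit the complement of $B_\gamma(\fix(\phi))$. It then proves the uniform estimate: there is $\beta>0$ such that if $\dist(x,\fix(\phi))\geq\gamma$ and $\diam(\phi_{[0,s]}(x))<\beta$ then $|s|<\epsilon$ --- this is precisely your compactness argument, but run at a point that is uniformly far from $\fix(\phi)$, so that the limit point $z$ is guaranteed to be regular and $\phi_{\epsilon'}(z)\neq z$ yields the contradiction. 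Combining the two, one slides along the orbit segment $A$ to a slice containing a point outside $B_\gamma(\fix(\phi))$ and applies the $\beta$-estimate there; no construction near $p$ is needed. In your scheme the missing ingredient is exactly the first of these two facts: without it, the accumulation point of your sequence may be singular and the compactness argument gives nothing. To repair your proof you would need to prove that escape property and then relocate your Hausdorff-limit argument from $p$ to a regular accumulation point of the points $\phi_{t_n}(z_n)\notin B_\gamma(\fix(\phi))$.
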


\begin{proof}
 To prove that 2 implies 1 notice that 
 $\Phi_{t_0}^\alpha(A)\subset \phi_{(t_0-\epsilon,t_0+\epsilon)}(x)$ implies that $A$ is an orbit segment. 
 Then $\expc=\delta$ (for an arbitrary $\epsilon>0$) is an expansivity constant.
 
To prove the converse assume that $\phi$ is singular cw-expansive with expansivity constant $\expc$. 
It is easy to see that if $\gamma\in(0,\expc)$ then for
all $x\notin \fix(\phi)$ there exists $t\in\R$ such that $\phi_t(x)\notin
B_\gamma(\fix(\phi))$.
Take $\epsilon>0$. 

In this paragraph we will show that there exists $\beta>0$ such that 
if $\dist(x,\fix(\phi))\geq \gamma$ and $\diam(\phi_{[0,s]}(x))<\beta$ then $|s|<\epsilon$. 
By contradiction, suppose that there exist $x_n\notin
B_\gamma(\fix(\phi))$ and $t_n>\epsilon$ such that
$\diam(\phi_{[0,t_n]}(x_n))\to 0$. Taking a subsequence we can
suppose that $x_n\to z$. Notice that $z$ is a regular point and then there exists $\epsilon'\in(0,\epsilon)$
such that $\phi_{\epsilon'}(z)\neq z$. By the continuity of $\phi$
we have that $\phi_{\epsilon'}(x_n)$ converges to
$\phi_{\epsilon'}(z)$. This is a contradiction because
$\diam(\phi_{[0,t_n]}(x_n))\to 0$.

To show that $\delta=\min\{\expc,\beta\}$ satisfies the thesis suppose that
$$\diam(\Phi_t^\alpha(A))<\delta$$ for all $t\in\R$ and some continuum of clocks $\alpha\colon A\to\clocks$.
Our hypothesis implies that $A$ is an orbit segment.
Without loss of generality we can suppose that $A\cap\fix(\phi)=\emptyset$. Take $x\in A$ and
$t_0\in\R$ such that $\phi_{t_0}(x)\notin B_\gamma(\fix(\phi))$.
Since
$\phi_{t_0}(x)\notin B_\gamma(\fix(\phi))$ 
and $\diam(\Phi_t^\alpha(A))<\beta$
we have that $\Phi_t^\alpha(A)\subset \phi_{(\alpha(t_0)-\epsilon,\alpha(t_0)+\epsilon)}(x)$. 
\end{proof}

\subsection{Kinematic cw-expansivity}
\label{secKinExp}
In analogy with the definitions considered in \cite{ArKinExp}
we say that a flow $\phi$ is \emph{kinematic cw-expansive} 
if there is $\expc>0$ such that if 
$\diam(\phi_t(A))<\expc$ for all $t\in\R$ and some continuum $A$ then $A$ is an orbit segment.
Considering the clock $c(t)=t$ for all $t\in\R$, the \emph{perfect clock} in Massera's \cite{Massera} terms, 
we see that singular cw-expansivity implies kinematic cw-expansivity.
In this section we will derive some properties of singular cw-expansive flows 
only assuming this weak form of expansivity.

\begin{prop}
\label{propKcw-expSingTotDisc}
 If $\phi$ is kinematic cw-expansive then $\fix(\phi)$ is totally disconnected. 
\end{prop}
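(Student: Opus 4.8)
The plan is to argue by contradiction: I assume that $\fix(\phi)$ is not totally disconnected and extract from this a nondegenerate subcontinuum of $\fix(\phi)$ whose diameter is smaller than the expansivity constant $\expc$. Since such a subcontinuum is fixed pointwise by the flow, it will violate kinematic cw-expansivity.

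First I would record that $\fix(\phi)$ is compact: it equals $\bigcap_{t\in\R}\{p\in M:\phi_t(p)=p\}$, an intersection of closed sets, hence a closed subset of the compact space $M$. If $\fix(\phi)$ were not totally disconnected, then some connected component $C$ of $\fix(\phi)$ would have more than one point; since components are closed in the compact space $\fix(\phi)$, $C$ is a nondegenerate continuum contained in $\fix(\phi)$.

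The key step is to extract from $C$ a nondegenerate subcontinuum $A$ with $\diam(A)<\expc$. If $\diam(C)<\expc$ one simply takes $A=C$. Otherwise I would use a standard fact of continuum theory: a nondegenerate continuum contains nondegenerate subcontinua of arbitrarily small diameter. Concretely, choose $p,q\in C$ with $\dist(p,q)=\diam(C)\ge\expc$; then $U=B_{\expc/3}(p)\cap C$ is a nonempty proper open subset of $C$ (it contains $p$ but not $q$), and by the boundary bumping theorem the component $A$ of $\clos(U)$ containing $p$ meets $\partial U$. Hence $A$ is a continuum containing $p$ together with a point at positive distance from it, so $A$ is nondegenerate, while $A\subset\clos(B_{\expc/3}(p))$ gives $\diam(A)\le 2\expc/3<\expc$. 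I expect this extraction of a small nondegenerate subcontinuum to be the main obstacle, being the only genuinely nontrivial ingredient.

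Finally I would close the argument. Since $A\subset\fix(\phi)$, every point of $A$ is fixed, so $\phi_t(A)=A$ and therefore $\diam(\phi_t(A))=\diam(A)<\expc$ for all $t\in\R$. Kinematic cw-expansivity then forces $A$ to be an orbit segment. But an orbit segment contained in $\fix(\phi)$ is a single point: writing $A=\phi_I(x)$ and picking $t_1\in I$, the point $\phi_{t_1}(x)\in\fix(\phi)$ is fixed, whence $\phi_t(x)=\phi_{t_1}(x)$ for every $t\in I$ and $A$ is a singleton. This contradicts the nondegeneracy of $A$, and therefore $\fix(\phi)$ is totally disconnected.
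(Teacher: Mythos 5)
Your proof is correct and follows essentially the same route as the paper: extract a nondegenerate subcontinuum of $\fix(\phi)$ of diameter less than $\expc$, observe it is pointwise fixed, and contradict kinematic cw-expansivity. The only difference is that you justify the extraction of the small subcontinuum via boundary bumping, a step the paper simply asserts.
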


\begin{proof}
 If $\fix(\phi)$ is not totally disconnected then there is a non-trivial continuum $A\subset \fix(\phi)$. 
 Moreover, given an arbitrary $\expc>0$ we can assume that $\diam(A)<\expc$. 
 Since $A\subset\fix(\phi)$ we have that $\phi_t(A)=A$ for all $t\in\R$, contradicting the cw-expansivity of $\phi$.
\end{proof}

A \emph{singular connection} is a regular orbit $\phi_\R(x)$ such that 
$\lim_{t\to+\infty}\phi_t(x)$ and $\lim_{t\to-\infty}\phi_t(x)$ are fixed points.

\begin{prop}
 If $\phi$ is kinematic cw-expansive with expansivity constant $\expc$ then 
 there are no singular connections of diameter smaller than $\expc$.
\end{prop}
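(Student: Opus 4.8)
The plan is to use the closure of the singular connection as an invariant continuum and then contradict the definition of kinematic cw-expansivity. Suppose, aiming at a contradiction, that there is a singular connection $\phi_\R(x)$ with $\diam(\phi_\R(x))<\expc$, and let $p=\lim_{t\to+\infty}\phi_t(x)$ and $q=\lim_{t\to-\infty}\phi_t(x)$ be the fixed points to which its orbit converges. Define $A=\clos(\phi_\R(x))$. Since $\phi_\R(x)$ is a continuous image of $\R$ it is connected, so $A$ is a connected compact subset of $M$, that is, a continuum; moreover $A=\phi_\R(x)\cup\{p,q\}$ is non-degenerate, as it contains the regular point $x$ and the fixed point $p\neq x$.

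The key observation is that $A$ is flow-invariant. Indeed, each $\phi_t$ is a homeomorphism, so $\phi_t(A)=\phi_t(\clos(\phi_\R(x)))=\clos(\phi_t(\phi_\R(x)))=\clos(\phi_\R(x))=A$, where I use that the orbit $\phi_\R(x)$ is invariant under the flow. Since passing to the closure does not change the diameter, $\diam(\phi_t(A))=\diam(A)=\diam(\phi_\R(x))<\expc$ for every $t\in\R$.

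Now I would apply the definition of kinematic cw-expansivity to the continuum $A$: because $\diam(\phi_t(A))<\expc$ for all $t$, it follows that $A$ must be an orbit segment. This is where the contradiction arises, since an orbit segment is contained in a single orbit, whereas $A$ contains the regular point $x$, whose orbit is $\phi_\R(x)$, together with the fixed point $p$, whose orbit is the singleton $\{p\}$. As $x\neq p$ lie on distinct orbits, $A$ cannot be contained in any single orbit and hence is not an orbit segment, contradicting kinematic cw-expansivity.

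The only delicate point I foresee is the verification that the closure of the connection is genuinely flow-invariant with the same diameter; once that is established, the final contradiction is immediate, because the simultaneous presence of a regular orbit and a fixed point inside $A$ forces $A$ to meet at least two distinct orbits. It is worth noting that no control over the rate of convergence or over any reparameterization is required, which is precisely why the conclusion already holds under the weaker kinematic form of expansivity rather than under full cw-expansivity.
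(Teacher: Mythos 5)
Your proof is correct and follows exactly the paper's argument: take $A=\clos(\phi_\R(x))$, observe it is an invariant continuum of diameter less than $\expc$, and note it cannot be an orbit segment since it contains a regular point and a fixed point lying on distinct orbits. The extra details you supply (invariance of the closure, identification of $A$ with $\phi_\R(x)\cup\{p,q\}$) are all sound and merely make explicit what the paper leaves implicit.
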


\begin{proof}
 If $\phi_\R(x)$ is a singular connection of diameter smaller than $\expc$ then 
 consider the continuum $A=\clos(\phi_\R(x))$. 
 We have that $\phi_t(A)=A$ for all $t\in\R$ and $\diam(A)<\expc$. 
 This contradicts the hypothesis because $A$ is not an orbit segment.
\end{proof}

A closed $\phi$-invariant set $\Lambda\subset M$ is $\phi$-\emph{isolated} 
if there is an open set $U$ containing $\Lambda$ such that 
if $\phi_\R(x)\subset U$ then $\phi_\R(x)\subset\Lambda$.
Note that $\fix(\phi)$ is a closed $\phi$-invariant set. 
For a $k^*$-expansive flow, $\fix(\phi)$ is finite and isolated.
In the following example we show that without restrictions on the topology of 
$M$ the set $\fix(\phi)$ may not be $\phi$-isolated for a kinematic cw-expansive flow.

\begin{ejp}
\label{ejpNoFixIso}
 Consider the set 
 \[
  M=\{(x,y)\in\R^2:x^2+y^2=1/n, n\in\Z^+\}\cup\{(0,0)\}
 \]
 and a flow $\phi$ in $M$ with a periodic orbit in each circle $x^2+y^2=1/n$ and a fixed point in $(0,0)$.
 It is easy to prove that $\fix(\phi)=\{(0,0)\}$ is not $\phi$-isolated. 
 To see that the flow is cw-expansive note that every continuum $A\subset M$ is contained in an orbit, so, the proof is trivial.
\end{ejp}

In the flow given in Example \ref{ejpNoFixIso} we see that the fixed point is not isolated because 
it is accumulated by small periodic orbits. The next result shows that this is the only possible case.

\begin{prop}
\label{propFixNoIsoPer}
 If $\phi$ is a kinematic cw-expansive flow of the compact metric space $M$ 
 then there is a neighborhood $U$ of $\fix(\phi)$ such that if $\phi_\R(x)\subset U\setminus\fix(\phi)$ then $\phi_\R(x)$ is a periodic orbit.
\end{prop}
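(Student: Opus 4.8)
The plan is to construct $U$ so that any regular orbit trapped inside it is automatically confined to a piece of small diameter, and then feed the closure of that orbit into the kinematic cw-expansivity hypothesis. First I would exploit that $\fix(\phi)$ is compact, being closed in the compact space $M$, and totally disconnected by Proposition \ref{propKcw-expSingTotDisc}. A compact totally disconnected metric space is zero-dimensional, so it admits a basis of relatively clopen sets; fixing $\rho\in(0,\expc)$ and using compactness, I would cover $\fix(\phi)$ by finitely many such clopen sets of diameter less than $\rho$ and disjointify them (differences of clopen sets are clopen) to obtain a finite partition $\fix(\phi)=F_1\sqcup\dots\sqcup F_k$ with $\diam(F_i)<\rho$.

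Each $F_i$ is compact and the $F_i$ are pairwise disjoint, hence pairwise at positive distance. I would thicken each $F_i$ to an open neighborhood $U_i=B_{r_i}(F_i)$ with $r_i$ small enough that $\diam(U_i)<\rho$ and that the $U_i$ remain pairwise disjoint, taking $r_i$ below half the minimal distance between the $F_j$. Setting $U=U_1\cup\dots\cup U_k$ gives an open neighborhood of $\fix(\phi)$ each of whose connected components has diameter less than $\rho$.

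Now suppose $\phi_\R(x)\subset U\setminus\fix(\phi)$. Since $\phi_\R(x)$ is connected and $U$ is a disjoint union of open sets, $\phi_\R(x)$ lies in a single $U_i$, so $\diam(\phi_\R(x))\le\diam(U_i)<\rho<\expc$. The set $A=\clos(\phi_\R(x))$ is then a continuum with $\diam(A)<\expc$, and since the closure of an orbit is invariant, $\phi_t(A)=A$, whence $\diam(\phi_t(A))<\expc$ for all $t\in\R$. By kinematic cw-expansivity $A$ is an orbit segment. As $A$ is compact and invariant it can only be a fixed point or a periodic orbit: a genuine arc $\phi_{[a,b]}(y)$ cannot satisfy $\phi_s(A)=A$ for all $s$ unless the underlying orbit is periodic. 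Because $x$ is regular, $A$ is not a single fixed point, so $A$ is a periodic orbit and therefore $\phi_\R(x)=A$ is periodic.

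The routine parts are the flow-invariance of $\clos(\phi_\R(x))$ and the bookkeeping with the radii $r_i$. The step needing the most care is the construction of the separated clopen cover: I must ensure simultaneously that the resulting components of $U$ have diameter below the expansivity constant and that they stay disjoint, since it is exactly this separation that forces a connected orbit to remain in one small piece. I would also want to state cleanly the elementary fact that a compact, flow-invariant orbit segment through a regular point is necessarily a periodic orbit, as this is what converts the conclusion of cw-expansivity into the desired periodicity.
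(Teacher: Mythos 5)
Your proposal is correct and follows essentially the same route as the paper: use total disconnectedness of $\fix(\phi)$ to produce finitely many pairwise disjoint open sets of diameter below the expansivity constant, note that a connected orbit trapped in their union lies in a single piece, and apply kinematic cw-expansivity to the (invariant, small-diameter) closure of that orbit to conclude periodicity. The only difference is that you spell out the clopen-partition-and-thickening construction that the paper takes for granted.
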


\begin{proof}
Let $\expc$ be an expansivity constant of the flow. 
By Proposition \ref{propKcw-expSingTotDisc} we know that $\fix(\phi)$ is totally disconnected. 
Then, there are $U_1,\dots,U_n$ disjoint open subsets of $M$ 
covering $\fix(\phi)$ such that $\diam(U_i)<\expc$ for each $i=1,\dots,n$. 
Define $U=U_1\cup\dots\cup U_n$.
Suppose that $\phi_\R(x)\subset U$ for some regular point $x$. 
Then, there is $i=1,\dots,n$ such that $\phi_\R(x)\subset U_i$. 
Consider the continuum $A=\clos(\phi_\R(x))$. 
Since $\phi_t(A)=A$ for all $t\in\R$ we conclude that $A$ is contained in an orbit. 
This implies that $A$ is the orbit of $x$, i.e., the orbit of $x$ is closed and 
$x$ is periodic.
\end{proof}

As we proved in Example \ref{ejpNoFixIso}, the set of fixed points may not be $\phi$-isolated for a 
kinematic cw-expansive flow on an arbitrary metric space. 
On a compact surface $\fix(\phi)$ is $\phi$-isolated.

\begin{prop}
 If $M$ is a compact surface and $\phi$ is a kinematic cw-expansive flow on $M$ 
 then $\fix(\phi)$ is $\phi$-isolated.
\end{prop}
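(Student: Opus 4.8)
The plan is to argue by contradiction: assuming that $\fix(\phi)$ is not $\phi$-isolated, I will manufacture a small invariant topological disk and contradict kinematic cw-expansivity, using the fact that a disk is not an orbit segment. The new ingredient, compared with Example \ref{ejpNoFixIso}, is the surface structure, which I will exploit through the Jordan--Schoenflies theorem.

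First I would fix an expansivity constant $\expc$ and, using that $\fix(\phi)$ is totally disconnected by Proposition \ref{propKcw-expSingTotDisc}, cover it by finitely many pairwise disjoint open sets $U_1,\dots,U_n$, each contained in a coordinate chart, homeomorphic to a round disk in the chart coordinates, and of diameter less than $\expc$; set $U=U_1\cup\dots\cup U_n$. Just as in the proof of Proposition \ref{propFixNoIsoPer}, a regular orbit contained in $U$ lies in a single $U_i$ (being connected), and its closure is an invariant continuum of diameter less than $\expc$, hence an orbit segment, which forces the orbit to be periodic. Now, if $\fix(\phi)$ were not $\phi$-isolated, then $U$ would fail to isolate it, so there would be a regular point $x$ with $\phi_\R(x)\subset U$; by the preceding remark $\gamma=\phi_\R(x)$ is a periodic orbit contained in some $U_i$.

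The core of the argument is then local and geometric. Since $\gamma$ is a periodic orbit of a flow, the map $\R/T\Z\to M$ parametrizing it is a continuous injection from a compact space into a Hausdorff space, hence a homeomorphism onto its image, so $\gamma$ is an embedded circle, i.e.\ a Jordan curve inside the disk chart $U_i$. By Jordan--Schoenflies it bounds a closed topological disk $D$, and because $U_i$ is a chart disk of diameter less than $\expc$ one gets $D\subset U_i$ and $\diam(D)\le\diam(U_i)<\expc$. The disk $D$ is invariant: orbits cannot cross, so no orbit issuing from $\interior(D)$ can ever reach $\partial D=\gamma$ (otherwise that point would lie on $\gamma$, forcing the whole orbit to equal $\gamma$), whence $\phi_t(\interior D)\subset\interior D$ and $\phi_t(D)=D$ for all $t$. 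Consequently $\diam(\phi_t(D))=\diam(D)<\expc$ for every $t\in\R$, so kinematic cw-expansivity would make the continuum $D$ an orbit segment. This is impossible, since $D$ contains the orbit $\gamma$ together with an interior point, which lies on a different orbit; thus $D$ is not contained in a single orbit. This contradiction yields that $\fix(\phi)$ is $\phi$-isolated.

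The hard part will be the diameter control on the Schoenflies disk: one must choose the covering sets $U_i$ to be genuinely disk-like inside a chart (e.g.\ round balls in the chart coordinates), so that the region enclosed by $\gamma$ remains inside $U_i$ and inherits its small diameter. Merely requiring $\diam(U_i)<\expc$, without controlling the topology of $U_i$, would not bound $\diam(D)$, and the final contradiction would break down. Once this is arranged, the invariance of $D$ and the observation that a disk is not an orbit segment are immediate.
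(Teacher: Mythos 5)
Your proof is correct and follows essentially the same route as the paper's: Proposition \ref{propFixNoIsoPer} (whose argument you reproduce) produces a periodic orbit of diameter less than $\expc$ near $\fix(\phi)$, which by Jordan--Schoenflies bounds a small $\phi$-invariant disc, and that disc is an invariant continuum that is not an orbit segment, contradicting kinematic cw-expansivity. You supply details the paper leaves implicit (embeddedness of the periodic orbit, invariance of the disc, diameter control); the one wrinkle you flag---making the $U_i$ simultaneously disjoint and disk-like---can be sidestepped, since it suffices that the periodic orbit itself has small diameter (disjoint small open sets with no topological requirement give this) and that any Jordan curve of sufficiently small diameter lies in a round chart ball, whose convexity in chart coordinates then bounds the Schoenflies disc and its diameter.
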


\begin{proof}
By Proposition \ref{propFixNoIsoPer} we have that if $\fix(\phi)$ is not isolated then 
there are periodic orbits of arbitrarily small diameter. 
Take a periodic orbit bounding a $\phi$-invariant disc $A$. 
The continuum $A$ contradicts the kinematic cw-expansivity of the flow.
\end{proof}

\section{Singular flows of surfaces}
\label{secSingSurf}
In \cite{Ar} it is proved that the expansivity of surface flows depends on the geometric separating effect 
of the stable and unstable manifolds of the fixed points of saddle type.
In this section we establish, with a result and examples, that new mechanisms 
may produce cw-expansive flows of surfaces.

\begin{obs}
 The first thing to observe with respect to singular cw-expansive flows of surfaces is that they are singular, that is, 
 there must be at least one fixed point. 
 This can be proved following \cite{Ar,Flinn,LG}, where this result is proved for expansive flows.
\end{obs}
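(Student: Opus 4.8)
The plan is to prove the contrapositive: if $\fix(\phi)=\emptyset$ then $\phi$ is not singular cw-expansive. The first step is topological. A compact surface admits a fixed-point-free continuous flow only if its Euler characteristic $\chi(M)$ vanishes (a Poincaré--Hopf type obstruction that holds already at the level of continuous flows), so $M$ must be the torus $\mathbb{T}^2$ or the Klein bottle $\K$. This restricts the recurrence drastically: by the topological Poincaré--Bendixson theory available on these surfaces, every minimal set of a regular flow is a periodic orbit, an exceptional (Cantor-like) minimal set, or the whole surface, and in each case the transverse dynamics along a local cross-section is modelled by a homeomorphism of a one-dimensional set.

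The second step is to localize the obstruction on a flow box. Fix a regular point $p$ and a small arc $\Sigma$ transverse to $\phi$ through $p$, together with the associated flow box; by compactness and the absence of fixed points the box has uniform width and the orbits returning to $\Sigma$ define a return homeomorphism $f$ of (a subarc of) $\Sigma$. The guiding idea, following \cite{Ar,Flinn,LG}, is that $f$ is a homeomorphism of an interval or a circle and hence cannot separate a transversal arc uniformly: there is a nondegenerate subarc $A\subset\Sigma$ whose entire forward and backward return orbit stays within transversal distance less than $\expc$ of a single reference orbit. Since $\Sigma$ is transverse to the flow, such an $A$ is contained in no orbit, so it is not an orbit segment.

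The third step converts this transversal non-separation into a continuum of clocks. For each $x\in A$ I would choose the clock $\alpha_x$ so that $\alpha_x(t)$ follows the orbit of $x$ at exactly the pace needed to keep $\phi_{\alpha_x(t)}(x)$ inside the same flow box as the reference orbit at time $t$; continuity of the return times in the flow box yields a continuous continuum of clocks $\alpha\colon A\to\clocks$. By construction the flow-direction drift is cancelled, so $\diam(\Phi_t^\alpha(A))$ is governed only by the transversal spreading of $A$ under iterates of $f$, which was arranged to stay below $\expc$ for all $t\in\R$. As $A$ is not an orbit segment, this contradicts Definition \ref{dfSingCwExp}.

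I expect the main obstacle to be precisely the upgrade from expansivity to cw-expansivity. The cited works rule out Bowen--Walters expansivity by exhibiting two distinct orbits that remain close under a single reparameterization; cw-expansivity is weaker, so refuting it demands the stronger conclusion of a genuine nondegenerate continuum equipped with a continuous family of clocks. The delicate points are therefore the continuity and global consistency of $x\mapsto\alpha_x$ over all of $A$ and for every $t\in\R$ (not merely over one return), and the treatment of the cases lacking a global cross-section, such as flows with exceptional minimal sets on $\mathbb{T}^2$ or Reeb-type components, where the return map is only partially defined; here one must either pass to a minimal set and exploit its transversal Cantor structure or patch local flow boxes along the orbit of $A$ while keeping the accumulated clock reparameterization under control.
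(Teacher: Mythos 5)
The paper offers no written proof of this remark --- only the pointer to \cite{Ar,Flinn,LG}, where the corresponding statement is proved for expansive flows --- so your plan is precisely the elaboration the author has in mind, and I see no step in it that would fail. You also correctly locate the one place where work beyond the cited papers is required: those arguments refute expansivity by exhibiting two inseparable orbits, whereas refuting cw-expansivity needs a nondegenerate continuum together with a continuous family of clocks. Two points should be nailed down rather than left as a ``guiding idea''. First, the continuum upgrade rests on the lemma that for every homeomorphism $f$ of an arc or a circle and every $\epsilon>0$ there is a nondegenerate subarc $J$ with $\diam(f^n(J))<\epsilon$ for all $n\in\Z$; this is true and elementary (fundamental domains between consecutive fixed points of an iterate when the rotation number is rational; wandering intervals or uniform continuity of the conjugacy to a rotation when it is irrational), but it is the actual content of the remark and deserves a proof. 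Second, your reduction via $\chi(M)=0$ to the torus and the Klein bottle is both slightly too narrow (the paper's own Example \ref{ejpEjpAnnulus} lives on an annulus, which also carries fixed-point-free flows) and unnecessary: it is cleaner to take an arbitrary minimal set $\mu$ of the flow --- in the absence of fixed points a periodic orbit, an exceptional minimal set, or a minimal torus --- and place the cross-section $\Sigma$ at a point of $\mu$. This guarantees that the return map and all of its iterates are defined on the subarc $J$ produced by the lemma, which is exactly the issue you flag about partially defined return maps, and your clock construction (matching the consecutive return times of each $x\in J$ to those of a reference point by piecewise-linear clocks) then goes through as in the expansive case.
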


For the study of singular cw-expansive flows of surfaces we introduce a definition of separatrix following \cite{ABZ}. 
Given $\epsilon>0$ a \emph{positive} $\epsilon$-\emph{separatrix} of a flow $\phi$ is a positive trajectory $l^+=\phi_{\R^+}(y)$ 
such that $\lim_{t\to+\infty}\phi_t(y)$ is a fixed point and 
for all $\delta>0$ there is $z\in B_\delta(y)$ with $\phi_s(z)\notin B_\epsilon(l^+)$ for some $s>0$.
A \emph{negative} $\epsilon$-\emph{separatrix} is a positive $\epsilon$-separatrix for the inverse flow $\phi^{-1}_t=\phi_{-t}$. 
An $\epsilon$-\emph{separatrix} is a positive or negative $\epsilon$-separatrix.

\begin{prop} 
\label{propCondSufCwExp}
 If $\phi$ is a flow of a compact surface $M$ satisfying: 
 \begin{enumerate}
  \item $\fix(\phi)$ is totally disconnected and $\phi$-isolated, and 
  \item there is $\epsilon>0$ such that the union of the $\epsilon$-separatrices is dense in $M$
 \end{enumerate}
then $\phi$ is singular cw-expansive.
\end{prop}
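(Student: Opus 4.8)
The plan is to argue by contradiction, producing from any hypothetical "non-separating" continuum of clocks a point that violates the escape property of some $\epsilon$-separatrix, thereby contradicting hypothesis (2). Concretely, I would fix the expansivity constant $\expc>0$ small enough that $\expc\le\epsilon$ and that, using (1), each ball $B_\expc(p)$ with $p\in\fix(\phi)$ lies inside an isolating neighborhood of $\fix(\phi)$ and meets no other fixed point. I then assume for contradiction that $\alpha\colon A\to\clocks$ is a continuum of clocks with $\diam(\Phi_t^\alpha(A))<\expc$ for all $t\in\R$, while $A$ is not an orbit segment, and I aim to exhibit the forbidden escaping point.

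First I would dispose of fixed points. If some $p\in A\cap\fix(\phi)$, then $\phi_{\alpha_p(t)}(p)=p$ for all $t$, so the diameter bound forces $\Phi_t^\alpha(A)\subset B_\expc(p)$ for every $t$; since each clock $\alpha_x$ is onto $\R$, this gives $\phi_\R(x)\subset B_\expc(p)$ for every $x\in A$, and $\phi$-isolation then yields $\phi_\R(x)\subset\fix(\phi)$. As $\fix(\phi)$ is totally disconnected, the continuum $A$ would collapse to a single fixed point, i.e.\ an orbit segment, a contradiction; hence $A\cap\fix(\phi)=\emptyset$. Next, since a compact connected subset of a single orbit is an arc and therefore an orbit segment, the assumption that $A$ is not an orbit segment forces $A$ to meet at least two distinct orbits; being connected and regular, $A$ must then be transverse to the flow at some point $x_0$, meaning that in a flow box $U\cong(-1,1)\times(-1,1)$ around $x_0$ the transverse projection of $A\cap U$ is a non-degenerate interval, so $A$ meets a whole interval of nearby orbits.

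The heart of the argument is an escape-versus-synchronization contradiction. Using density of the $\epsilon$-separatrices together with the surface hypothesis (so the local cross section is one-dimensional), I would find a positive $\epsilon$-separatrix $l^+=\phi_{\R^+}(y)$, lying on an orbit $O$ that meets $A$ transversally at an interior transverse height, say at a point $w\in A\cap O$. Because the escape set is open and, by flow-box invariance of the escape property along orbits, accumulates on $O$ from a side lying inside the transverse range of $A$, I can pick $z\in A$ on an escaping orbit arbitrarily close to $w$; moreover, shrinking the distance forces the escape time $s>0$ with $\phi_s(z)\notin B_\epsilon(l^+)$ to be as large as desired. Now choose $t^*$ with $\alpha_z(t^*)=s$. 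Taking $z$ close enough to $w$ makes $s$, hence $t^*$, hence $\alpha_w(t^*)$, large enough that $\phi_{\alpha_w(t^*)}(w)\in l^+$. Since $w,z\in A$, both $\phi_{\alpha_w(t^*)}(w)$ and $\phi_s(z)=\phi_{\alpha_z(t^*)}(z)$ lie in $\Phi_{t^*}^\alpha(A)$, so $\dist(\phi_s(z),l^+)\le\dist(\phi_s(z),\phi_{\alpha_w(t^*)}(w))<\expc\le\epsilon$; thus $\phi_s(z)\in B_\epsilon(l^+)$, contradicting the choice of $z$. This shows $A$ must be an orbit segment.

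I expect the main obstacle to be the crossing step: turning "the union of $\epsilon$-separatrices is dense" into a separatrix that genuinely crosses $A$ and whose escaping orbits actually meet $A$. The clean point I would lean on is that on a surface the relevant cross section is an interval, so the transversality of $A$ guarantees that $A$ meets a full interval of nearby orbits; combined with the openness of the escape set and the invariance of escaping along orbits, this places escaping points on $A$ as near the crossing as desired. The secondary technical point, handled above by taking the escape time large, is to guarantee that the synchronized companion $\phi_{\alpha_w(t^*)}(w)$ has actually entered the forward ray $l^+$ rather than sitting on the backward part of $O$.
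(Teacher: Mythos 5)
Your proof is correct and follows essentially the same strategy as the paper's: use density of the $\epsilon$-separatrices to find a separatrix orbit meeting the continuum $A$, transport an escaping point guaranteed by the separatrix definition into $A$ along the flow, and synchronize the clocks at the escape time to force $\diam(\Phi^\alpha_{t^*}(A))\geq\expc$. The differences are only in bookkeeping: your preliminary elimination of fixed points from $A$ is not needed (the paper only uses total disconnectedness of $\fix(\phi)$ to find a regular point in $A$), and where you ensure the synchronized companion $\phi_{\alpha_w(t^*)}(w)$ lands on $l^+$ by forcing the escape time to be large, the paper instead takes the separatrix base point $y$ itself in $A$ and covers a full ball $B_\delta(y)$ by $\phi_{(-\tau,\tau)}(A)$ before pulling the escaping point back into $A$.
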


\begin{proof}
Given $\epsilon$ as in the statement we will show that any $\expc\in(0,\epsilon)$
is an expansivity constant of the flow.
Let $\alpha\colon A\to\clocks$ be a continuum of clocks and
assume that $A$ is not an orbit segment.
Since $\fix(\phi)$ is totally disconnected the continuum $A$ contains a regular point $x$. 
Given that the $\epsilon$-separatrices are dense and $A$ is not contained in an orbit, 
there is a regular point $y\in A$ in an $\epsilon$-separatrix. 
Without loss of generality assume that $l^+=\phi_{\R^+}(y)$ is a positive $\epsilon$-separatrix. 
Also, we can assume that 
$\phi_{(-\tau,\tau)}(A)$ is a neighborhood of $y$ contained in $B_\expc(y)$, for some $\tau>0$. 
Let $\delta>0$ be such that $B_\delta(y)\subset \phi_{(-\tau,\tau)}(A)$.
By the definition of separatrix there is $z\in B_\delta(y)$ such that 
$\phi_s(z)\notin B_\expc(l^+)$, for some $s>0$.
Since $B_\delta(y)\subset \phi_{(-\tau,\tau)}(A)$, we can assume that $z\in A$. 
Take $t_0>0$ such that $\alpha_z(t_0)=s$. 
Then, as $y,z\in A$ and $\phi_s(z)\notin B_\expc(\phi_{\R^+}(y))$ we conclude that 
$\dist(\phi_{\alpha_y(t_0)}(y), \phi_{\alpha_z(t_0)}(z))>\expc$.
In this way $\diam(\Phi^\alpha_{t_0}(A))>\expc$ and the proof ends.
\end{proof}

\begin{prob}
Is the converse of Proposition \ref{propCondSufCwExp} true? 
The problem we found is the possibility of infinitely many fixed points (for instance, a Cantor set).
\end{prob}

\begin{ejp}
\label{ejpIrrSing}
Let $M$ be the two-dimensional torus and consider a vector field $X$ defining an irrational flow on $M$. 
Take a point $p\in M$ and a smooth map $\rho\colon M\to\R$ such that $\rho(p)=0$ and 
$\rho(x)>0$ for all $x\neq p$. 
Let $\phi$ be the flow induced by the vector field $\rho X$. 
This flow is singular cw-expansive but it is not $k^*$-expansive.
Let us give a variation of this example.
Given an arc $l\subset M$ transverse to $X$, a totally disconnected set $K\subset l$ and 
a non-negative smooth map $\rho\colon M\to \R$ vanishing only at $K$ 
the flow induced by $\rho X$ is cw-expansive but it is not $k^*$-expansive.
\end{ejp}

\begin{obs}
We recall that in \cite{Willy} it is shown that every non-singular cw-expansive flow 
has positive topological entropy if the space has topological dimension greater than 1.
Since surface flows have vanishing topological entropy \cite{Young}, we conclude that the flows in Example \ref{ejpIrrSing} 
are singular cw-expansive with vanishing topological entropy. 
\end{obs}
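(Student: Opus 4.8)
The plan is to obtain the statement not as a fresh construction but as a two-ingredient deduction, combining the cited entropy results to force both the vanishing of entropy and the presence of singularities on the flows of Example \ref{ejpIrrSing}. The first thing I would record is that the phase space in that example is the two-dimensional torus (and, in the stated variation, again a compact surface), so its topological dimension equals $2$. This single number does double duty: it is $>1$, which is exactly the hypothesis needed to invoke the result of \cite{Willy}, and it places the flow within the scope of \cite{Young}.

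For the entropy clause I would apply \cite{Young} directly. The reparametrized field $\rho X$ is a smooth vector field on the compact surface $M$, hence induces a genuine surface flow, and every such flow has vanishing topological entropy; thus $h_{\mathrm{top}}(\phi)=0$ with no computation. The only point deserving a line of verification is that reparametrizing a smooth vector field by a non-negative smooth factor $\rho$ keeps us inside the regularity class for which \cite{Young} is proved, which it does since $\rho X$ is again smooth.

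The remaining clause, that $\phi$ is \emph{singular}, I would then derive as the contrapositive of \cite{Willy}: that theorem asserts that a \emph{non-singular} cw-expansive flow on a space of topological dimension greater than $1$ has \emph{positive} entropy. Since $\phi$ is cw-expansive (established in Example \ref{ejpIrrSing}) and lives on a $2$-dimensional space, yet has zero entropy by the previous paragraph, it cannot be non-singular; hence it must possess fixed points. This is consistent with, and explains, the explicit choice of $\rho$ vanishing at $p$ (respectively on $K$), and it is the conceptual heart of the remark: the singularities are precisely what permit a cw-expansive surface flow to escape the positive-entropy conclusion that is forced in the non-singular, dimension-${>}1$ setting.

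The step I expect to require the most care is not any real difficulty but the compatibility of hypotheses across the two sources. I would check that the notion of topological entropy for flows used in \cite{Willy} coincides with the one for which \cite{Young} establishes vanishing on surfaces, and that the dimension hypothesis of \cite{Willy} is read for the whole phase space $M$ rather than for some invariant subset. Once these conventions are aligned, the argument is a short syllogism; everything substantive has already been supplied by the two citations and by the cw-expansivity asserted in Example \ref{ejpIrrSing}.
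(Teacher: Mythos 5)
Your proposal is correct, and for the substantive clause---vanishing entropy---it is exactly the paper's argument: the phase space is a compact surface, so \cite{Young} gives $h_{\mathrm{top}}(\phi)=0$ directly, and the result of \cite{Willy} is invoked against the backdrop of topological dimension $2>1$. The one place you diverge is the singularity clause. In the paper there is nothing to derive: the flows of Example \ref{ejpIrrSing} have fixed points \emph{by construction} (the factor $\rho$ vanishes at $p$, respectively on $K$), and the citation of \cite{Willy} serves only as a contrast explaining why such zero-entropy examples are necessarily singular. You instead recover the existence of fixed points as the contrapositive of Cordeiro's entropy theorem, which is a legitimate and conceptually pleasant re-reading, but note a small precision it requires: Example \ref{ejpIrrSing} establishes \emph{singular} cw-expansivity in the sense of Definition \ref{dfSingCwExp}, whereas the entropy theorem of \cite{Willy} concerns his original cw-expansivity. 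Your syllogism ``cw-expansive $+$ zero entropy $\Rightarrow$ singular'' therefore needs the bridge of Proposition \ref{propEqDfSingCwNoFix}: if the flow were non-singular, singular cw-expansivity would coincide with Cordeiro's cw-expansivity, and only then does positive entropy follow and yield the contradiction. With that one line added, your deduction is airtight; without it, the phrase ``$\phi$ is cw-expansive (established in Example \ref{ejpIrrSing})'' conflates the two definitions, which is precisely the distinction the remark is designed to highlight.
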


The following is an example that presents wandering points and a Lyapunov stable periodic orbit.

\begin{ejp}
\label{ejpEjpAnnulus}
 Let $f\colon [0,1]\to[0,1]$ be an increasing diffeomorphism such that 
 $f(x)=x/2$ if $x\in[0,1/2]$ 
 and $f(x)>x$ for all $x\in(0,1)$. 
 Let $X$ be a vector field on the annulus 
 \[
  M=\{(x,y)\in\R^2:1\leq\sqrt{(x-2)^2+y^2}\leq 4\}
 \]
 such that $l=[0,1]\times \{0\}\subset M$ is a global cross section 
 with Poincaré return map $(x,0)\mapsto (f(x),0)$, for all $x\in [0,1]$. 
 For each $n\geq 1$ consider a finite set $Y_n\subset [1/2^{n+2},1/2^{n+1}]$ 
 such that $Y=\cup_{n\geq 1} f^{-n}(Y_n)$ is dense in $[1/4,1/2]$.
Consider a non-negative function 
$\rho\colon M\to\R$ vanishing only at $\clos(Y)$. 
By Proposition \ref{propCondSufCwExp},
the flow induced by the vector field $\rho X$ is singular cw-expansive.
Notice that the periodic orbit contained in the smaller circle is Lyapunov stable for $t\to-\infty$. 
It is known that a non-singular cw-expansive flow of a locally connected, compact and connected metric space (of arbitrary dimension) 
cannot have Lyapunov stable points.
\end{ejp}

\begin{obs}
If we collapse the small circle in the boundary of the annulus $M$ of Example \ref{ejpEjpAnnulus} we obtain a disc, 
and it is easy to see that the flow (in the disc) is singular cw-expansive. 
Adding fixed points in the boundary of the disc, taking copies of the disc and gluing orbits in the boundaries 
we can obtain a singular cw-expansive flow on an arbitrary compact surface. 
Expansive flows in the sense of Komuro do not exist on every surface, see \cite{Ar}. 
The possibility of non-trivial recurrence is necessary in order that a surface admits a $k^*$-expansive flow.
\end{obs}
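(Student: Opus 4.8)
The plan is to establish the Remark's assertion—that every compact surface carries a singular cw-expansive flow—by assembling the flow from copies of a single \emph{disc block} and then checking the two hypotheses of Proposition~\ref{propCondSufCwExp} for the result, rather than arguing from Definition~\ref{dfSingCwExp} directly. Throughout I take the conclusions of Example~\ref{ejpEjpAnnulus} as given: its flow is singular cw-expansive, and since the example is certified through Proposition~\ref{propCondSufCwExp}, its fixed-point set is totally disconnected and $\phi$-isolated and, for some $\epsilon_0>0$, its $\epsilon_0$-separatrices are dense.

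First I would make the collapse precise. The inner periodic orbit $\gamma$ is Lyapunov stable in one time direction, so orbits accumulate on it monotonically in the cross-section coordinate; hence collapsing $\gamma$ to a point $p_0$ yields a well-defined continuous flow on the quotient disc $D$ with $p_0$ fixed. Its fixed-point set is that of Example~\ref{ejpEjpAnnulus} together with $p_0$, so it is again totally disconnected, and it stays $\phi$-isolated because the relation $f(x)>x$ off the collapsed end prevents small periodic orbits and trapped regular orbits near $p_0$, while density of the $\epsilon_0$-separatrices survives since it is a local property away from $\gamma$. Thus Proposition~\ref{propCondSufCwExp} applies and the disc flow is singular cw-expansive. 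I would then fix the \emph{block}: the disc $D$ whose boundary circle (itself a periodic orbit with return coordinate $x=1$) has been subdivided by finitely many fixed points $q_1,\dots,q_k$, obtained by making the field vanish at these boundary points, into regular orbit arcs, each arc a singular connection between consecutive $q_i$, i.e.\ a gradient-like flow on $\partial D$.

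To realise an arbitrary compact surface $S$ I would take a CW decomposition of $S$ into finitely many $2$-cells glued along $1$-cells meeting at $0$-cells, place a copy of the block on each $2$-cell so that $1$-cells become boundary orbits and $0$-cells become the vertices $q_i$, and glue. Each identification matches a regular boundary orbit to a regular boundary orbit and a fixed vertex to a fixed vertex, so the arc orientations can be chosen consistently and the pieces assemble into one continuous flow $\phi$ on $S$. It remains to verify the hypotheses of Proposition~\ref{propCondSufCwExp}. Condition~(2) transfers almost formally: density of separatrices is local, the open $2$-cells are dense in $S$, and each already contains a dense set of $\epsilon_0$-separatrices inherited from $D$, so for small $\epsilon$ the $\epsilon$-separatrices are dense in $S$. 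For condition~(1), $\fix(\phi)$ is the union of the finitely many vertices $q_i$ with the interior fixed-point sets of the blocks; each piece is closed and totally disconnected and the vertices separate the blocks, so the union is totally disconnected, while $\phi$-isolation persists because near each $q_i$ the only full orbits are the boundary connections themselves and the interior fixed points are isolated as in $D$.

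The main obstacle I expect is the gluing itself: arranging that the flow near the boundary of each block matches across an identified $1$-cell to a genuinely continuous (and, if one wishes, smooth) flow, with special care at the $0$-cells where several arcs and several $2$-cells meet. Concretely I would cover a collar of $\partial D$ by flow boxes in which the time parametrisations agree on the two sides of each $1$-cell, and then check compatibility at the vertices. The newly created boundary singular connections are not an obstruction: Proposition~\ref{propCondSufCwExp} delivers an expansivity constant $\expc\in(0,\epsilon)$, and since the boundary arcs have diameters bounded below by a positive constant one may take $\expc$ smaller still, so their existence is consistent with the necessary absence of singular connections of diameter below $\expc$. The genuinely local verifications—density of separatrices and total disconnectedness of $\fix(\phi)$—then follow from the corresponding facts for the single block.
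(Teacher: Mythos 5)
Your proposal is correct and follows essentially the same route as the paper, which offers this Remark only as a sketch: collapse the inner periodic orbit of Example \ref{ejpEjpAnnulus} to a fixed point, subdivide the boundary of the resulting disc by fixed points, glue copies of this block along the $2$-cells of a decomposition of the target surface, and certify the result through Proposition \ref{propCondSufCwExp} exactly as the annulus example itself is certified. The one point you rightly flag as the main obstacle --- consistency of the orbit directions on identified boundary arcs, especially at the $0$-cells --- is also left implicit in the paper, and is resolvable by choosing a cell decomposition compatible with your gradient-like boundary flow (e.g.\ after subdividing each edge so that source and sink vertices alternate around every $2$-cell).
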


\section{Singular cw-expansivity on three-manifolds}
\label{secThreeMan}
In this section we will construct singular cw-expansive vector fields on three-manifolds.
Let $M$ be a three-dimensional compact smooth manifold without boundary. 
On $M$ we consider a smooth vector field $X$.
Following \cite{ArPa} we say that $X$ is \emph{singular Axiom A} 
if its non-wandering set is a finite union 
$\Omega(X)=\Omega_1\cup\dots\cup\Omega_k$ 
with each $\Omega_i$ being compact, invariant, transitive, isolated, with a dense set of periodic points and 
either uniformly hyperbolic (in the standard sense of Smale's Axiom A) or a singular-hyperbolic attractor or a repeller.
For the next result we need to know that the Lorenz attractor is singular hyperbolic \cite{ArPa}.

\begin{teo}
\label{teoSingAxA}
 There are singular Axiom A singular cw-expansive vector fields on three-manifolds
 with $\Omega(X)=A\cup R$ where 
 $A$ is a Lorenz attractor and $R$ is a Lorenz repeller.
\end{teo}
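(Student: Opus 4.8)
The plan is to realize $X$ by gluing a geometric Lorenz model to a reversed copy of itself. Let $U$ be the standard trapping region of a geometric Lorenz flow $X_0$ (a compact region whose boundary $\partial U$ the flow enters transversally), and let $A=\bigcap_{t\geq 0}\phi^0_t(U)$ be the resulting Lorenz attractor; by \cite{ArPa} it is singular hyperbolic. Take a second copy $(U',X_0)$ and reverse time, so that $-X_0$ on $U'$ exits $\partial U'$ transversally and its maximal invariant set $R$ is a Lorenz repeller. Choosing a diffeomorphism $g\colon\partial U'\to\partial U$ matching the transverse flow directions (the flow leaves $U'$ and enters $U$ across the glued surface), I form the closed three-manifold $M=U\cup_g U'$ and a smooth vector field $X$ agreeing with $X_0$ on $U$ and with $-X_0$ on $U'$. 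Every orbit not contained in $A\cup R$ then crosses the gluing surface exactly once, travelling from $R$ to $A$.

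First I would check the singular Axiom A condition and compute $\Omega(X)$. The sets $A$ and $R$ are transitive with dense periodic orbits and are isolated in $U$ and $U'$ respectively, by the classical description of the geometric model; the isolating neighbourhoods persist after gluing because $X$ is transverse to the gluing surface. Building a Lyapunov function that strictly decreases along non-singular orbits across the transition region shows that no orbit outside $A\cup R$ is recurrent, so $\Omega(X)=A\cup R$. Together with the singular hyperbolicity of $A$ and $R$ this gives that $X$ is singular Axiom A with a Lorenz attractor and a Lorenz repeller, as required.

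The core of the proof is singular cw-expansivity. Fix $\expc>0$ smaller than $\dist(A,R)$ and smaller than the $k^*$-expansivity constants of $A$ (Komuro \cite{K}) and of $R$ for the reversed flow. Let $\alpha\colon K\to\clocks$ be a continuum of clocks with $\diam(\Phi^\alpha_t(K))<\expc$ for all $t\in\R$; I must show that $K$ is an orbit segment (here $K$ denotes the domain continuum, distinct from the attractor $A$). The argument is by localization. The key elementary observation is that $k^*$-expansivity implies singular cw-expansivity: if all pairwise reparametrized orbits of $K$ stay $\expc$-close, then for $x,y\in K$ the clock $h=\alpha_y\circ\alpha_x^{-1}$ keeps $\phi_s(x)$ and $\phi_{h(s)}(y)$ within $\expc$ for all $s$, whence $y\in\phi_{(-\epsilon,\epsilon)}(x)$; as $K$ is a continuum this forces $K$ to be a short orbit segment. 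Thus if the forward evolution of the thin tube $\Phi^\alpha_t(K)$ enters and stays in a neighbourhood of $A$, Komuro's theorem finishes the proof, and symmetrically for $R$ in backward time. The remaining case is when $K$ meets the transition region; there the flow has no recurrence, the Lyapunov function is nearly constant on each thin slice $\Phi^\alpha_t(K)$ while sweeping across the whole transition as $t$ varies, and this again forces $K$ to be a single orbit segment.

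The step I expect to be the main obstacle is the behaviour near the singularities of the two Lorenz models, where the two-dimensional stable manifold of $\sigma_A$ and unstable manifold of $\sigma_R$ reach into the transition region and where reparametrizing clocks have the most latitude: since orbit speed tends to zero at a fixed point, a clock family can keep a genuinely two-dimensional piece of a stable manifold thin over long time intervals. Controlling this is precisely what the $k^*$-expansivity of the geometric Lorenz attractor supplies locally, so the technical work is to make the expansivity constants near $A$, near $R$, and across the transition uniform and to patch them into a single global $\expc$. A secondary subtlety is that the \emph{global} flow need not be $k^*$-expansive, because of orbit pairs that shadow a singular connection from $\sigma_R$ to $\sigma_A$; continuum-wise expansivity survives this since it only constrains the connected domain $K$, which is the reason cw-expansivity, rather than Komuro's expansivity, is the natural conclusion.
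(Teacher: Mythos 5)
Your construction of $M$ and the verification that $X$ is singular Axiom A with $\Omega(X)=A\cup R$ match the paper's, and your reduction of expansivity on the non-wandering set to Komuro's theorem is also the paper's first step. The genuine gap is in the treatment of wandering continua, which is where essentially all of the work in the paper lies. A continuum $K$ of wandering points meets the transverse gluing surface $S$ in a connected set, and the dangerous configuration is a nontrivial arc in $S$ contained simultaneously in one leaf of the stable foliation of $A$ restricted to $S$ and in one leaf of the unstable foliation of $R$ transported to $S$ by the gluing map: such an arc stays thin for all forward time (its points share a stable leaf of the attractor) and for all backward time (they share an unstable leaf of the repeller), under any choice of clocks, while not being an orbit segment. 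Nothing in your argument excludes this. Your Lyapunov-function step does not help, because such an arc sits at essentially a single Lyapunov level and is simply carried from $R$ to $A$; absence of recurrence in the transition region says nothing about whether the arc is stretched transversally to the orbits. Likewise the local $k^*$-expansivity of $A$ cannot rule it out: $k^*$-expansivity separates orbits in the attractor, whereas two wandering points on the same stable leaf are never separated in forward time, which is precisely why the backward dynamics and the gluing map must enter the argument.

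The paper closes exactly this gap. It constructs the genus-two transverse surface $S$ explicitly, shows that the stable foliation of $A$ restricted to $S$ is a singular foliation with two saddle-type singularities at $W^{ss}(\sigma)\cap S$, and then proves (Lemma \ref{lemPert}) that for a residual set of gluing diffeomorphisms $\varphi$ every stable leaf meets every unstable leaf in a totally disconnected set. Only then does connectedness force $K\cap S$ to be a single point, so that $K$ is an orbit segment. The genericity of the gluing map is therefore not a detail but the essential ingredient missing from your proposal; with an arbitrary gluing map the conclusion can fail.
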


\begin{proof}
We start with a sketch of the construction.
Consider a Lorenz attractor $A$ in a three-dimensional Euclidean space $V$ 
defined by a vector field $X\colon V\to TV$, where $TV$ denotes 
the tangent bundle of $V$. 
Let $S\subset V$ be a smooth genus-two surface transverse to $X$ bounding the attractor $A$. 
Also assume that if $U$ is the open neighborhood of $A$ with $\partial U=S$ then 
$A=\cap_{t\geq 0} X_t(U)$.

Let $V'$ be a vector space isomorphic to $V$ and its corresponding vector field $X'$. 
If we consider the vector field $-X'$ we obtain a Lorenz repeller $R\subset V'$.
Analogously, consider a genus-two surface $S'$ and an open neighborhood $U'$ of $R$. 

Given a diffeomorphism $\varphi\colon S\to S'$ we consider the closed 
three-manifold 
\[
 M=\frac{U\cup S\cup U'\cup S'}{x\sim\varphi(x)}.
\]
The manifold $M$, whose topology depends on the gluing map $\varphi$, 
is the phase space of our examples. 
The global dynamic is \emph{simple}, there is an attractor, a repeller and 
wandering trajectories goes from the repeller to the attractor meeting the transverse surface $S$ in exactly one point. 

The expansivity on the non-wandering set was shown by Komuro ($k^*$-expansivity) \cite{K}. 
If $x$ is wandering and $y$ is non-wandering then for some $t$ we have that $\phi_t(x)$ is in $S$, while 
the whole orbit of $y$ is in the non-wandering set. Then $x$ and $y$ are separated by the flow. 
This reduces the problem of proving any kind of expansivity (cw-expansivity or $N$-expansivity) to study wandering points. 
Since every wandering trajectory meets the transverse surface $S$, we have to understand 
the stable and the unstable foliations restricted to $S$.

We now focus our attention to the attractor $A$. 
Assume that $A\subset \R^3$.
We will study the stable foliation via the geometric model of the Lorenz attractor. 
Following \cite{Robinson1981}, this model presents a hyperbolic fixed point 
$\sigma=(0,0,0)$ in $\R^3$ with $\dim(W^s(\sigma))=2$ and $\dim(W^u(\sigma))=1$.  
The classical picture of the geometric model is given in Figure \ref{figClassModel}.
The boundary of this volume is a genus two surface but it is not transverse to the flow. 
Also, it does not contain the attractor in its interior, for example, the fixed point $\sigma$ is 
in the boundary.

\begin{figure}[h]
\includegraphics{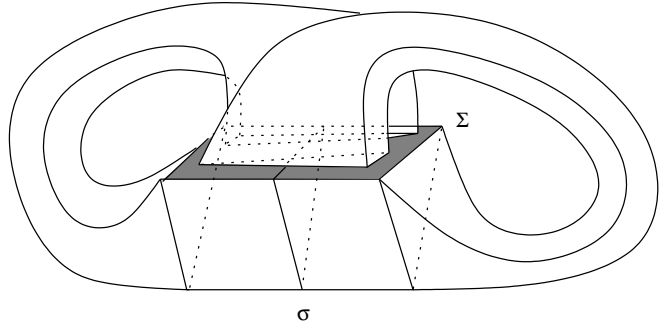} 
\caption{Classical geometric model of the Lorenz attractor.}
\label{figClassModel}
\end{figure}

To obtain our transverse surface $S$ we need to consider a neighborhood of the classical geometric model. 
For this purpose we introduce the following \emph{topological model} for the Lorenz attractor. 
See Figure \ref{figTopoModel}. 

\begin{figure}[h]
\includegraphics{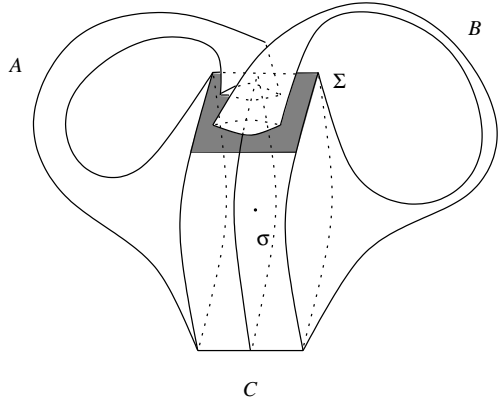} 
\caption{Topological model of the Lorenz attractor.}
\label{figTopoModel}
\end{figure}

We assume that in both models the cross section $\Sigma$ is given by 
\[
 \Sigma=\{(x,y,1):|x|,|y|\leq 1\}
\]
and that the stable foliation restricted to $\Sigma$ is given by constant $x$ coordinate.
The topological model consists on three topological cylinders $A,B,C$ shown in Figure \ref{figTopoModel}. 
The flow enters through the curved face of the cylinder $C$. 
The cylinders $A$ and $B$ are flow tubes (the trajectories are tangent in the boundary of $A$ and $B$). 
To plainly understand how to construct the (genus two) transverse surface $S$ 
consider the following version of the tube $B$ where we have taken a copy of $\Sigma$. 
The new cylinder that replaces the tube $B$ is called $B'$. See Figure \ref{figTubeB}.

\begin{figure}[h]
\includegraphics{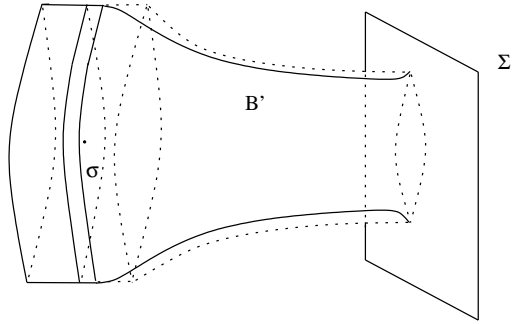} 
\caption{The cylinder $B'$ is transverse to the flow.}
\label{figTubeB}
\end{figure}

Analogously, it is constructed a cylinder $A'$ replacing the flow tube $A$. 
In this way we obtain our surface $S$, shown in Figure \ref{figTopoLor}. 

Now we must understand how does the stable foliation look when restricted to $S$. 
Take a point $x\in S$. 
If for some $t>0$ we have that $\phi_t(x)\in\Sigma$ then 
the stable foliation on a neighborhood of $x$ (a neighborhood in $S$) 
is obtained as the pull-back of the stable foliation on $\Sigma$. 
At these points we obtain a smooth foliation. 
We know that $S$, having genus two, 
must have singular points of the stable foliation. 
These singular points appear at the intersection of the stable manifold of $\sigma$ with $S$. 
Notice that there is an arc in $S$, disjoint from $\Sigma$, of points whose positive trajectory 
goes to $\sigma$ without crossing $\Sigma$.
Denote by $W^{ss}(\sigma)$ the strong stable manifold (one-dimensional) of $\sigma$.

\begin{figure}[h]
\includegraphics{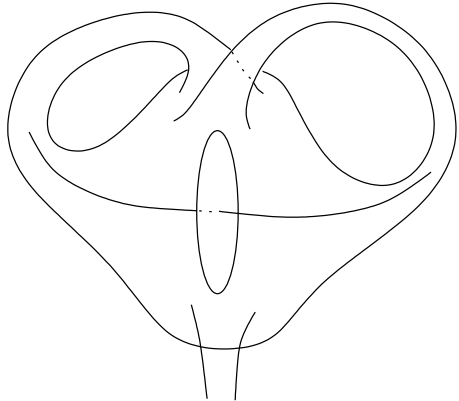} 
\caption{The genus two surface $S$ transverse to the flow and containing the Lorenz attractor.}
\label{figTopoLor}
\end{figure}

 Now we show that the stable foliation restricted to $S$ has two singular points of saddle type at 
 $W^{ss}(\sigma)\cap S$.
 Consider the transverse sections shown in Figure \ref{figCrossSections}. 
The rectangle $E$ is contained in $\Sigma$ where the stable foliation is parallel to $W^{ss}(\sigma)$ in the linear neighborhood of $\sigma$. 
The rectangle $F$ is perpendicular to $W^u(\sigma)$ and the stable foliation is parallel to $W^{ss}(\sigma)$ too. This is 
a usual assumption in the geometric model. 
The rectangle $G$ is perpendicular to $W^{ss}(\sigma)$. 
The linearity of the model near $\sigma$ allows us to explicitly calculate the flow and the Poincaré maps. 
This gives us that the stable foliation on $G$ has a singular point of saddle type. 
Analogous considerations gives us another singular point in the other branch of $W^{ss}(\sigma)$. 
\begin{figure}[h]
\includegraphics{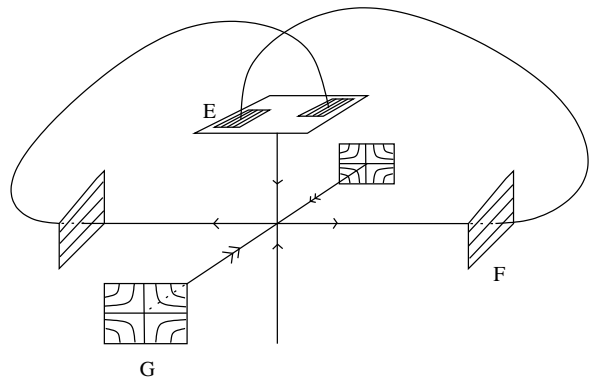} 
\caption{A singular point of the stable foliation appears in $G$.}
\label{figCrossSections}
\end{figure}

To finish the proof we note that we can choose the gluing map $\varphi$ in such a way that 
each stable leaf intersects each unstable leaf in a totally disconnected set, see Lemma \ref{lemPert} below. 
This implies that the flow is singular cw-expansive.
\end{proof}

\begin{lemma}
 \label{lemPert}
 Let $S,S'$ be two diffeomorphic compact surfaces with $C^1$ foliations
$F,F'$ with finitely many hyperbolic 
singular points. 
Then, there is a residual set of $C^1$ diffeomorphisms $\varphi\colon S\to S'$ 
such that $\varphi(L)\cap L'$ is totally disconnected
for every pair of leaves $L,L'$ of $F,F'$ respectively.
\end{lemma}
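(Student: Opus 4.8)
The plan is to read total disconnectedness as the \emph{absence of shared leaf-arcs} and to deduce it from a single transversality condition on $\varphi$ that, once verified, settles all pairs $(L,L')$ at once; this is what lets us avoid intersecting uncountably many conditions. Since $\varphi$ is a homeomorphism, $\varphi(L)\cap L'=\varphi\bigl(L\cap\varphi^{-1}(L')\bigr)$, so it is equivalent to work on $S$ with the two foliations $F$ and the pulled-back foliation $\varphi^{*}F'$. A nondegenerate connected subset of $L\cap\varphi^{-1}(L')$ is an arc lying in a single leaf of $F$ and in a single leaf of $\varphi^{*}F'$, so along it the tangent line fields of the two foliations coincide. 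Hence it suffices to produce a residual set of $\varphi$ for which $F$ and $\varphi^{*}F'$ share no leaf-arc.

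Away from the finitely many singular points, $F$ and $F'$ are given by $C^1$ line fields, that is, by sections of the projectivized tangent bundles; let $\Sigma'\subset \mathbb{P}(TS')$ be the image of the section defining $F'$, a codimension-one submanifold. For each $\varphi$ define $\Theta_\varphi\colon S\setminus\mathrm{Sg}(F)\to\mathbb{P}(TS')$ by sending $p$ to the line $D\varphi_p$ applied to the $F$-direction at $p$, based at $\varphi(p)$. Because the base points agree automatically, the tangency locus of $F$ and $\varphi^{*}F'$ is exactly $T_\varphi=\Theta_\varphi^{-1}(\Sigma')$. Thom's parametric transversality theorem, applied to the family $\{\Theta_\varphi\}$ (a local perturbation of $\varphi$ near a point moves its $1$-jet, hence $\Theta_\varphi$, freely), shows that for a residual set of $\varphi$ one has $\Theta_\varphi\pitchfork\Sigma'$, so that $T_\varphi$ is a $1$-manifold. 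A second, \emph{leaf-wise} transversality, requiring $D\Theta_\varphi$ to be non-tangent to $\Sigma'$ along the $F$-direction, equivalently that $T_\varphi$ be transverse to $F$, then forces $T_\varphi$ to meet each leaf of $F$ in isolated points. Since any shared leaf-arc would be an $F$-leaf-arc contained in $T_\varphi$, such arcs are excluded and every $\varphi(L)\cap L'$ is totally disconnected off the singularities. Using compactness of $S$ to reduce to finitely many foliation boxes, together with the fact that the good set is a $G_\delta$ and Baire's theorem, yields the required residual set.

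It remains to control intersections inside small discs around the finitely many singular points of $F$ and of $\varphi^{*}F'=\varphi^{-1}(\mathrm{Sg}(F'))$. Here I would invoke the hyperbolicity hypothesis: near a hyperbolic singularity the foliation has a standard saddle or node normal form, so a single leaf is again a tame curve (together with the singular point) and the curve-to-curve analysis above applies after imposing finitely many additional open-dense conditions on the $1$-jet of $\varphi$ on a neighborhood of each singular point. Intersecting these finitely many conditions with the residual set from the previous step gives the lemma.

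The main obstacle is the \emph{leaf-wise} transversality of the second paragraph: whether $T_\varphi$ carries a leaf-arc of $F$ is an order-two condition on $\varphi$, whereas the hypotheses supply only $C^1$ regularity, so the naive jet-transversality argument is not directly available. I would resolve this either by passing to the smoother category in which the foliations of the intended application actually live and approximating in the $C^1$ topology, or by formulating the condition through a foliated (multijet) transversality adapted to the leaf direction. A secondary difficulty is the genuinely non-transverse accumulation of leaves at the hyperbolic singularities, where the total tangency count is constrained by the index formula; this is dealt with only locally, via the normal forms.
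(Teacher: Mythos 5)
Your reduction in the first paragraph (total disconnectedness fails iff $F$ and $\varphi^{*}F'$ share a nontrivial leaf-arc) is correct and is also the point of view taken implicitly in the paper. But the route you choose to exclude shared leaf-arcs has a genuine gap, and it is exactly the one you flag yourself: the condition that the tangency locus $T_\varphi=\Theta_\varphi^{-1}(\Sigma')$ be transverse to $F$ is a condition on the $2$-jet of $\varphi$, since $\Theta_\varphi$ already consumes the $1$-jet. In the space of $C^1$ diffeomorphisms there is no $2$-jet to perturb, so neither Thom's parametric transversality nor a multijet version applies; and passing to a smoother category changes the hypotheses of the lemma (and still leaves you to check that the good set is residual in the $C^1$ topology, where transversality of $\Theta_\varphi$ to $\Sigma'$ is not even an open condition, because nearby $C^1$ maps have $1$-jets that are only $C^0$-close). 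There is a second regularity problem you do not mention: for a $C^1$ foliation the tangent line field is merely continuous, so $\Sigma'\subset\mathbb P(TS')$ need not be a $C^1$ submanifold and $\Theta_\varphi$ need not be differentiable; the very objects to which you want to apply transversality theory are not smooth enough. So the second paragraph of your argument, which carries the whole weight of the proof, does not go through as stated.

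The paper avoids the tangency locus entirely. It fixes a Riemannian metric on $S'$, uses the hyperbolicity of the singularities only to guarantee a lower bound $\epsilon_0$ on the length of closed leaves, and for each $\epsilon<\epsilon_0$ defines $D_\epsilon$ to be the set of $\varphi$ for which no arc of $\varphi(L)\cap L'$ has length $\geq\epsilon$; the residual set is $\bigcap_n D_{1/n}$. Openness of $D_\epsilon$ is a compactness argument on arcs of length exactly $\epsilon$ (the length bound on closed leaves ensures the Hausdorff limit is again an arc in both foliations). Density is obtained by covering every leaf-arc of length $\epsilon$ with finitely many transversal arcs $l_1,\dots,l_j$ and making two $C^1$-small perturbations: first so that the tangencies on $\bigcup l_i$ are finite, then so that at those finitely many points no nontrivial shared arc passes. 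This only requires killing shared arcs of a definite length at each scale, never requires $T_\varphi$ to be a manifold, and stays within first-order (hence $C^1$-accessible) perturbations. If you want to salvage your approach, you would need to replace the leaf-wise transversality step by an argument of this localized, scale-by-scale type rather than by a global structure theorem for the tangency set.
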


\begin{proof}
On the surface $S'$ consider a Riemannian metric and denote by $\len(\alpha)$ the length 
of a piecewise $C^1$ curve $\alpha$. 
Since $F'$ has hyperbolic singular points, there is $\epsilon_0>0$ such that 
no closed leaf has length smaller than $\epsilon_0$. 
For $\epsilon\in (0,\epsilon_0)$ let $D_\epsilon$ be the set of diffeomorphisms $\varphi\colon S\to S'$ 
such that if a curve $\alpha$ is contained in $\varphi(L)\cap L'$ then $\len(\alpha)<\epsilon$.

We will show that each $D_\epsilon$ is open and dense in the space of all diffeomorphisms from $S$ to $S'$. 
Then the residual set will be obtained as $\cap_{n\geq 1/\epsilon_0} D_{1/n}$, $n\in \Z$. 
To prove that $D_\epsilon$ is open we show that its complement is closed. 
Let $\varphi_k\notin D_\epsilon$ be a convergent sequence of diffeomorphisms, $\varphi_k\to \varphi$, such that 
for each $k\geq 1$ there is an arc $\alpha_k\subset \varphi_k(L_k)\cap L'_k$ for some sequences of leaves $L_k\subset S$ and $L'\subset S'$ and $\len(\alpha_k)=\epsilon$.
Take $x_k\in\alpha_k$ and assume that $x_k\to x\in S'$.
Since there are no closed leaves of length smaller than $\epsilon$, the limit of 
$\alpha_k$ in the Hausdorff metric is an arc of length $\epsilon$ contained in the leaves through $x$ of both foliations 
$F'$ and $\varphi(F)$. Then $\varphi\notin D_\epsilon$ and $D_\epsilon$ is open.

To prove that $D_\epsilon$ is dense consider $l_1,\dots,l_j\subset S'$ a finite number of smooth compact arcs transverse to the leaves of $F'$ such that each arc of a leaf of $F'$ of length $\epsilon$ intersects at least one of the arcs $l_i$. 
In addition we assume that the arcs $l_i$ are pairwise disjoint.
Given a diffeomorphism $\varphi\colon S\to S'$ we will show that there is a small $C^1$ perturbation of $\varphi$ in $D_\epsilon$. 
We say that there is a $\varphi$-\emph{tangency} at $x\in S'$ if the leaves of $F'$ and $\varphi(F)$ are tangent at $x$.
First, with a small perturbation $\varphi_1$ of $\varphi$ we can assume that the set of $\varphi_1$-tangency points $x\in\cup_{i=1}^jl_i$ is finite.
Second, with a small perturbation $\varphi_2$ of $\varphi_1$ we can obtain that 
at the finite set of $\varphi_2$-tangencies there is no non-trivial arc simultaneously contained in leaves of both foliations $F'$ and $\varphi_2(F)$. These perturbations can be obtained with standard techniques of bump functions. 
Then, as every arc of a leaf of $F'$ of length $\epsilon$ intersects at least one of the $l_i$, we have that no arc of $\varphi_2$-tangencies has length $\epsilon$. This proves that $D_\epsilon$ is dense. 
\end{proof}

\begin{obs}
 In the rectangle $G$ of Figure \ref{figCrossSections} consider the central point, say $p$. 
 The local stable set of $p$ is just the vertical segment. 
 The trajectories of the points of the horizontal line of $p$ in $G$ are separated from the trajectory of $p$ 
 by the flow (for example, some of these trajectories goes to the rectangle $F$).
\end{obs}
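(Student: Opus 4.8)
The plan is to read both assertions directly off the linear normal form of the Lorenz vector field near the hyperbolic fixed point $\sigma$, the same linearization that was invoked in the proof of Theorem \ref{teoSingAxA} to compute the Poincaré maps. First I would choose linearizing coordinates $(u,v,w)$ centered at $\sigma$ in which the flow is $\dot u=\lambda_1 u$, $\dot v=\lambda_2 v$, $\dot w=\lambda_3 w$, with the Lorenz eigenvalue configuration $\lambda_1>0>\lambda_2>\lambda_3$. In these coordinates $W^u(\sigma)$ is the $u$-axis, the weak stable direction is the $v$-axis, the strong stable manifold $W^{ss}(\sigma)$ is the $w$-axis, and the full stable manifold $W^s(\sigma)$ is the plane $\{u=0\}$. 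Since $G$ is perpendicular to $W^{ss}(\sigma)$ it lies in a plane $\{w=w_0\}$, its central point is $p=W^{ss}(\sigma)\cap G=(0,0,w_0)$, the horizontal direction of $G$ is the $u$-coordinate, and the vertical direction is the $v$-coordinate.

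For the first assertion I would observe that, because $p\in W^{ss}(\sigma)$, its forward orbit converges to $\sigma$. A point $q=(u_0,v_0,w_0)\in G$ has $\phi_t(q)\to\sigma$ as $t\to+\infty$ precisely when $q\in W^s(\sigma)$, that is $u_0=0$, since under the linear flow $u(t)=u_0e^{\lambda_1 t}$ forces any $q$ with $u_0\neq 0$ out of the linearizing chart along the unstable direction. Hence the set of points of $G$ whose forward orbit stays near and converges to the orbit of $p$ is exactly the vertical segment $\{u=0\}\cap G$, which is the claim.

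Next I would establish the separation. A point $q$ on the horizontal line through $p$ has $v_0=0$ but $u_0\neq 0$, so $|u(t)|=|u_0|e^{\lambda_1 t}\to\infty$ and its forward orbit leaves every fixed neighborhood of $\sigma$, escaping along $W^u(\sigma)$ and therefore crossing the rectangle $F$, which is perpendicular to $W^u(\sigma)$. Meanwhile the forward orbit of $p$ converges to $\sigma$. To phrase this in the language of this paper, for any clocks $\alpha_p,\alpha_q\in\clocks$ one has $\alpha_q(t)\to+\infty$ as $t\to+\infty$, so $\phi_{\alpha_q(t)}(q)$ traverses the entire escaping forward orbit of $q$ and eventually exits $B_\expc(\sigma)$, while $\phi_{\alpha_p(t)}(p)$ stays arbitrarily close to $\sigma$; thus $\dist(\phi_{\alpha_p(t)}(p),\phi_{\alpha_q(t)}(q))>\expc$ for some $t$, which is exactly the sense in which the horizontal trajectories are separated from that of $p$ by the flow.

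The one delicate point is that the vector field is genuinely linear only inside the chart, so the explicit eigenvalue computation is valid only there. I expect this to be harmless: the two qualitative facts actually used—convergence to $\sigma$ if and only if $u_0=0$, and escape along $W^u(\sigma)$ when $u_0\neq 0$—are precisely the defining properties of the local stable and unstable manifolds of the hyperbolic saddle $\sigma$, supplied by the stable manifold theorem independently of any global linearity. Once the orbit of $q$ leaves the chart along $W^u(\sigma)$ it follows the global unstable manifold to $F$, so no estimate outside the chart is needed for the separation conclusion.
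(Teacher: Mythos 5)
Your proposal is correct and matches the paper's (implicit) justification: the remark rests on exactly the linear normal form near $\sigma$ with the Lorenz eigenvalue ordering $\lambda_1>0>\lambda_2>\lambda_3$ that the proof of Theorem \ref{teoSingAxA} invokes to compute the flow and Poincar\'e maps near $W^{ss}(\sigma)$, and your identification of the vertical segment with $W^s(\sigma)\cap G=\{u=0\}\cap G$ and the escape of points with $u_0\neq 0$ along $W^u(\sigma)$, together with the clock-uniform separation estimate, is the intended argument. One negligible overstatement: only the points with one sign of $u_0$ actually cross the rectangle $F$ (the others exit along the opposite branch of $W^u(\sigma)$, which is why the paper says ``some of these trajectories''), but this does not affect your separation conclusion.
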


\begin{obs}
 In \cite{ArS3} it is shown that the three-sphere admits $k^*$-expansive flows with hyperbolic fixed points, in particular, these flows are singular cw-expansive. 
 From the proof of Theorem \ref{teoSingAxA} we have that the three-sphere admits singular cw-expansive flows with wandering points. 
 For this purpose one has to consider a gluing diffeomorphism $\varphi$ in order to obtain the three-sphere and then to perform a small perturbation of Lemma \ref{lemPert}.
\end{obs}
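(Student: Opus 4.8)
The plan is to re-use verbatim the construction in the proof of Theorem~\ref{teoSingAxA} and to exploit the freedom in the gluing diffeomorphism $\varphi\colon S\to S'$. Recall that in that construction the neighborhoods $U$ and $U'$ are the trapping regions of the Lorenz attractor $A$ and of the Lorenz repeller $R$, each bounded by a genus-two surface ($S=\partial U$ and $S'=\partial U'$), and that the closed three-manifold is
\[
 M=\frac{U\cup S\cup U'\cup S'}{x\sim\varphi(x)}.
\]
The essential observation is that $U$ and $U'$ are genus-two handlebodies, so gluing them along their boundary surfaces produces a closed orientable three-manifold presented by a genus-two Heegaard splitting, whose diffeomorphism type is determined by the isotopy class of $\varphi$.

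First I would fix a gluing map realizing the sphere. Since the standard genus-two Heegaard splitting of $S^3$ is obtained by gluing two genus-two handlebodies along their boundaries, there is a diffeomorphism $\varphi_0\colon S\to S'$ for which $M\cong S^3$. Moreover, the diffeomorphism type of $M$ depends only on the isotopy class of the gluing map, so every $\varphi$ that is $C^1$-close enough to $\varphi_0$ (hence isotopic to it) yields a manifold again diffeomorphic to $S^3$; in other words, the set of gluings producing the sphere is open in the $C^1$ topology.

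Second I would upgrade $\varphi_0$ to a generic gluing. The stable and unstable foliations induced on $S$ and $S'$ are $C^1$ foliations with finitely many hyperbolic singular points (the two saddles at $W^{ss}(\sigma)\cap S$ exhibited in the proof of Theorem~\ref{teoSingAxA}), so Lemma~\ref{lemPert} applies and the set of gluing diffeomorphisms for which every stable leaf meets every unstable leaf in a totally disconnected set is residual, hence dense. Intersecting this dense set with the open set of sphere-producing gluings, I would pick a $\varphi$ close to $\varphi_0$ lying in both. With this choice $M\cong S^3$ and, exactly as in Theorem~\ref{teoSingAxA}, the totally disconnected intersection of the foliations forces singular cw-expansivity; the wandering trajectories travelling from $R$ to $A$ and crossing $S$ in a single point are the required wandering points.

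The main obstacle I foresee is the compatibility of the two requirements, namely guaranteeing that the purely topological constraint $M\cong S^3$ can be met simultaneously with the genericity condition of Lemma~\ref{lemPert}. This is resolved by the openness of the sphere condition together with the density of the residual set, but it must be checked that a $C^1$-small perturbation of the gluing map neither changes the diffeomorphism type nor destroys the global \emph{simple} dynamics (an attractor, a repeller, and wandering orbits meeting $S$ once), since both the trapping property $A=\cap_{t\geq 0}X_t(U)$ and its dual for $R$ are preserved under such perturbations.
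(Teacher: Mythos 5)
Your proposal is correct and follows essentially the same route the paper sketches in this remark: fix a gluing $\varphi_0$ realizing the standard genus-two Heegaard splitting of the three-sphere, then use the residual (hence dense) set of Lemma \ref{lemPert} to perturb within the $C^1$-open set of sphere-producing gluings, which is precisely the paper's ``small perturbation of Lemma \ref{lemPert}''. The extra justifications you supply --- that $C^1$-close gluing maps are isotopic and hence yield the same diffeomorphism type, and that changing $\varphi$ does not affect the trapping dynamics inside $U$ and $U'$ --- are correct elaborations of details the paper leaves implicit.
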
 
 
 \begin{obs}There are well known examples of Anosov flows on three-manifolds with wandering points \cite{FrWi}, which are robustly expansive (because Anosov flows are expansive and structurally stable \cite{Anosov}). 
 General properties of non-singular expansive flows of three-manifolds can be found in \cite{Pat}.
 Non-singular cw-expansive flows (not being expansive) of three-manifolds with wandering points can be obtained as the suspension of the examples in \cite{ArRobNexp}. 
 These examples are robustly (non-singular) cw-expansive in the $C^r$ topology with $r\geq 2$. 
\end{obs}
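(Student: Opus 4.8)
The first assertion is purely a matter of citation: by \cite{Anosov} every Anosov flow is expansive and structurally stable, so expansivity persists under small perturbations, and the Franks--Williams construction \cite{FrWi} exhibits an Anosov flow on a three-manifold with wandering points. I would therefore only sketch the last assertion, namely that suspending the robustly cw-expansive, non-expansive surface diffeomorphisms of \cite{ArRobNexp} produces non-singular cw-expansive flows on three-manifolds that have wandering points and are robustly cw-expansive in the $C^r$ topology for $r\geq 2$.

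The plan is to pass to the mapping torus. Let $f\colon\Sigma\to\Sigma$ be one of the diffeomorphisms of \cite{ArRobNexp}, which has wandering points, and let $\phi$ be its suspension on $M_f=\Sigma\times[0,1]/(x,1)\sim(f(x),0)$, for which $\Sigma\times\{0\}$ is a global cross section with constant return time and first-return map $f$. First I would note that $\phi$ is non-singular and that wandering points of $f$ yield wandering orbits of $\phi$. Next I would transfer cw-expansivity from $f$ to $\phi$. Using the clock formalism of \S\ref{secCorCwExp}, suppose $\alpha\colon A\to\clocks$ is a continuum of clocks with $\diam(\Phi_t^\alpha(A))$ small for all $t\in\R$. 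Tracking the successive intersections of $\Phi_t^\alpha(A)$ with $\Sigma\times\{0\}$, one obtains a continuum in the cross section all of whose $f$-iterates stay of small diameter; cw-expansivity of $f$ then forces this continuum to be a single point, after which the clock freedom collapses $A$ to an orbit segment. This shows $\phi$ is cw-expansive, equivalently singular cw-expansive by Proposition \ref{propEqDfSingCwNoFix}. Since $f$ is not expansive, the Bowen--Walters equivalence \cite{BW} shows $\phi$ is not expansive either.

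For robustness I would exploit the persistence of a global cross section. If $\psi$ is a flow $C^r$-close to $\phi$, then transversality and the boundedness of the return time of $\phi$ are open conditions, so $\Sigma\times\{0\}$ remains a global cross section for $\psi$, now with possibly non-constant return time and a first-return map $g$ that is $C^r$-close to $f$. The robust cw-expansivity of $f$ in \cite{ArRobNexp} makes $g$ cw-expansive, so the constant-return-time suspension of $g$ is cw-expansive by the previous paragraph; as $\psi$ differs from this suspension only by a time change, and the clock formalism makes cw-expansivity invariant under reparametrization, $\psi$ is cw-expansive as well. Hence $\phi$ is robustly cw-expansive in $C^r$, $r\geq 2$.

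The step I expect to be the main obstacle is the transfer from $f$ to $\phi$: one must verify that the independent reparametrizations encoded by the clocks cannot conspire to keep a continuum of positive transverse size small under the flow while its projection to the cross section is merely a point. Controlling this interplay between the time-changes and the first-return dynamics is the delicate point, and is precisely where the cw-expansivity of $f$ enters. By comparison, the persistence of the cross section and the $C^r$-closeness of the return map are routine once the return time of $\phi$ is known to be bounded.
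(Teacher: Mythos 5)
The paper offers no proof of this remark beyond its citations, and your sketch fills in exactly the intended argument: suspend the robustly $N$-expansive (hence cw-expansive but non-expansive) surface diffeomorphisms of \cite{ArRobNexp}, transfer cw-expansivity through the global cross section by a Bowen--Walters-style crossing argument, and get robustness from persistence of the cross section together with $C^r$-continuity of the return map, using Proposition \ref{propEqDfSingCwNoFix} to identify cw-expansivity with singular cw-expansivity in the nonsingular case. Your proposal is correct and consistent with the paper's route; the two points you rightly flag as delicate --- synchronizing the continuum of clocks with the section crossings, and invariance of cw-expansivity under time changes of nonsingular flows --- are precisely the standard ingredients covered by \cite{Willy} and the classical suspension arguments of \cite{BW}.
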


In \cite{Willy} it is introduced a definition of $N$-expansivity for flows. 
We will consider a variation of this definition in order to allow fixed points.

\begin{df}
Given a positive integer $N$, a flow $\phi$ of a metric space is \emph{singular} $N$-\emph{expansive} if 
 for all $\epsilon>0$ there is $\expc>0$ such that if 
 $A\subset M$ is compact, $\alpha\colon A\to\clocks$ is continuous and
 $\diam(\Phi^\alpha_t(A))<\expc$ for all $t\in\R$ then 
 $A$ is contained in $N$ orbit segments of diameter $\epsilon$.
\end{df}

\begin{obs}
From the construction of the flow in the proof of Theorem \ref{teoSingAxA} it seems that we can obtain 2-expansive flows. 
For this purpose it could be convenient to considered a gluing map $\varphi$ only introduce quadratic tangencies. 
\end{obs}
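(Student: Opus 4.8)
The plan is to revisit the reduction used in the proof of Theorem~\ref{teoSingAxA} and to refine the intersection analysis of Lemma~\ref{lemPert}, replacing the residual condition ``$\varphi(L)\cap L'$ totally disconnected'' by the requirement that the stable and unstable foliations on $S$ have only quadratic tangencies. The decisive structural point is that the definition of singular $N$-expansivity quantifies over \emph{compact} sets $A$, not only over continua as in cw-expansivity, so a close-staying $A$ may be disconnected; the number $N$ will be exactly the maximal number of points in which a short stable leaf can meet a short unstable leaf, which a quadratic tangency caps at two.

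First I would separate the non-wandering part from the wandering part, exactly as in the proof of Theorem~\ref{teoSingAxA}. Since $\Omega(X)=A\cup R$ lies at positive distance from the gluing surface $S$ while every wandering orbit crosses $S$, a point with non-wandering orbit and a wandering point are separated by the flow; hence for $\expc$ small any set with $\diam(\Phi^\alpha_t(A))<\expc$ for all $t$ is either contained in the non-wandering set or consists entirely of wandering orbits. On $A\cup R$ Komuro's $k^*$-expansivity \cite{K} forces such a set into a single orbit segment, so it suffices to treat the wandering case and to produce at most two orbit segments there.

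Second, for the wandering case I would pass to the global cross section $S$. Each wandering orbit meets $S$ in exactly one point, and this crossing depends continuously on the orbit, so $A$ has a well-defined compact trace $T\subset S$ of diameter $O(\expc)$. The forward contraction toward $A$ shows that keeping $\diam(\Phi^\alpha_t(A))$ small as $t\to+\infty$ forces $T$ into a single leaf $L$ of the stable foliation $F$, and the backward contraction toward $R$ forces $\varphi(T)$ into a single leaf $L'$ of the unstable foliation $F'$; thus $T\subset L\cap\varphi^{-1}(L')$. Given $\epsilon>0$ and choosing $\expc$ small enough that $T$ lies in a chart meeting at most one tangency, the quadratic-tangency hypothesis yields $\#\big(L\cap\varphi^{-1}(L')\big)\le 2$ (a transverse crossing contributes one point, a single quadratic tangency at most two). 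Hence $T$ has at most two points $q_1,q_2$, so $A\subset\phi_\R(q_1)\cup\phi_\R(q_2)$, and since $A$ has small diameter each piece $A\cap\phi_\R(q_i)$ sits in an orbit segment of diameter smaller than $\epsilon$, giving singular $2$-expansivity.

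It remains to guarantee that a gluing map $\varphi$ with only quadratic tangencies exists and that the local bound ``$\le 2$'' survives near the singular leaves; this is the step I expect to be the main obstacle. By a jet-transversality argument one perturbs $\varphi$ so that the two $C^1$ foliations on $S$ meet in finitely many quadratic (Morse) tangencies, along the same lines by which $D_\epsilon$ was made open and dense in Lemma~\ref{lemPert}, but now imposing nondegeneracy of the contact rather than mere total disconnectedness. The delicate point is the behaviour at the two saddle singularities $W^{ss}(\sigma)\cap S$ of the stable foliation (Figure~\ref{figCrossSections}): there the leaves are hyperbola-like, and one must arrange the singular sets of $F$ and $F'$ to be disjoint and then check that a regular leaf of the opposite foliation meets the separatrix configuration in at most two points. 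Higher-order tangencies there, or a leaf tangent to a separatrix, would push the constant above $2$, so excluding them by the same perturbation is precisely what must be controlled to confirm the conjectured value $N=2$.
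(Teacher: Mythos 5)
First, note what you were asked to prove: the statement is a \emph{remark}, and a deliberately tentative one (``it seems \dots it could be convenient''). The paper offers no proof of it at all --- it is a conjecture, and the two Problems that follow it show the author regards the key technical points as open. So there is no paper proof to match; the question is whether your blind attempt actually closes the conjecture. It does not, although your strategy is exactly the mechanism the remark envisions (reuse the wandering/non-wandering dichotomy from Theorem \ref{teoSingAxA}, trace a close-staying compact set $A$ on the cross surface $S$, and cap the cardinality of a stable-leaf/unstable-leaf intersection at $2$ via quadratic tangencies). Three steps are asserted rather than proved. (i) ``Forward contraction forces $T$ into a single leaf'': what the separation argument can give is a single \emph{local} plaque of the (local) stable set, and only after extending Komuro's $k^*$-expansivity of the attractor to points in the wandering region, uniformly over arbitrary continua of clocks in $\clocks$; a global leaf may revisit a small chart in several plaques, so the count ``$\le 2$'' needs the plaque-level statement, finiteness of tangencies, and transversality away from them. (ii) The final step ``since $A$ has small diameter each piece $A\cap\phi_\R(q_i)$ sits in an orbit segment of diameter smaller than $\epsilon$'' is a non sequitur: a compact subset of a single wandering orbit of small \emph{spatial} diameter need not lie in a short orbit segment, because the orbit accumulates on the attractor and can nearly return to itself; you need an analogue of the $\beta$-argument in the proof of Proposition \ref{propKomuroCordeiro}, plus separation (under the given clocks) of time-distant points on the same orbit. (iii) Regularity: the stable foliation of the geometric Lorenz model is only $C^1$ (Robinson, as cited), and ``quadratic (Morse) tangency'' and jet transversality require $C^2$ data; this is precisely why Lemma \ref{lemPert} works with $C^1$ foliations and settles for total disconnectedness, yielding cw-expansivity rather than $2$-expansivity. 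Your proposed jet-transversality perturbation of $\varphi$ is not available verbatim in this category.

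On the positive side, you correctly isolate the genuine obstruction --- the two saddle singularities of the stable foliation at $W^{ss}(\sigma)\cap S$, where leaves are hyperbola-like and higher-order contact or tangency with a separatrix would break the bound $N=2$ --- and this matches the paper's own hesitation: the remark following Lemma \ref{lemPert} only sketches why the flow separates orbits near the section $G$, and the subsequent Problems explicitly leave the control of tangencies near these singularities unresolved. In short: your proposal is a faithful and well-organized elaboration of the conjectured route, and its honest flagging of the singular-point issue is exactly right, but as a proof it has the concrete gaps (i)--(iii) above, so the remark remains, both in the paper and in your write-up, a plausible conjecture rather than a theorem.
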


\begin{prob}
 Is there a gluing map $\varphi$ making the flow singular expansive in the sense of Komuro ($k^*$-expansive)? 
 If this is the case, can $M$ be the three-sphere?
 For an affirmative answer, it seems that the strong invariant manifolds of the fixed points (of the attractor and the repeller) must be connected (identified) by $\varphi$. 
\end{prob}

\begin{prob}
 With the techniques of the proof of Theorem \ref{teoSingAxA}, 
 is it possible to obtain a robust singular cw-expansive (or $N$-expansive) vector field? 
 A $C^r$ topology should be considered with $r\geq 2$.
 An idea, for a positive answer, could be to control the tangencies appearing at wandering points as for example in \cite{ArRobNexp}. 
 The tangencies, that necessarily will appear near the singularities of the stable foliation, must be quadratic. 
 It seems that the hardest part is to control the dependence of the stable foliation on the cross section $G$ with respect to the perturbation of the vector field. 
 It could be a known topic of \emph{perturbations} and \emph{invariant manifolds} theories, but we were not able to 
 find a solution in the literature and the details are not clear to the author.
\end{prob}

\medskip
% The data information below will be filled by AIMS editorial staff
% Received xxx; revised xxx.
\medskip
\end{document}